\newtheorem{theorem}{Theorem}[section]
\newtheorem{lemma}[theorem]{Lemma}
\newtheorem{proposition}{Proposition}[section]
\theoremstyle{definition}
\theoremstyle{remark}
\numberwithin{equation}{section}
\newcommand{\al}{\alpha}
\newcommand{\be}{\beta}
\newcommand{\si}{\sigma}
\newcommand{\vep}{\varepsilon}
\newcommand{\Z}{\mathbb Z}
\newcommand{\C}{\mathbb C}
\newcommand{\g}{\mathfrak g}
\newcommand{\h}{\mathfrak h}
\newcommand{\cA}{\mathcal A}
\newcommand{\cK}{\mathcal K}
\newcommand{\mf}{\mathfrak}
\def\Za{\mathbb Z+1/2}
\def\Zp{\mathbb Z_++1/2}
\def\dsum{\displaystyle\sum}
\def\d{\delta}
\def\a{\alpha}
\def\ov{\overline}
\def\om{\omega}
\begin{document}

\title[Fermionic realization of twisted toroidal Lie algebras]
{Fermionic realization of twisted toroidal Lie algebras}
\author{Naihuan Jing$^*$} 
\address{Department of Mathematics,
   North Carolina State University,
   Raleigh, NC 27695-8205, USA}
\email{jing@ncsu.edu}
\author{Chad R. Mangum}
\address{School of Mathematical and Statistical Sciences,
   Clemson University, Clemson, SC 29634, USA}
\email{crmangu@clemson.edu}
\author{Kailash C. Misra}
\address{Department of Mathematics,
   North Carolina State University,
   Raleigh, NC 27695-8205, USA}
\email{misra@ncsu.edu}
\thanks{$^*$Corresponding author: Naihuan Jing}
\thanks{Jing acknowledges the support of Simons Foundation grant no. 523868 and National Natural Science Foundation
of China grant no. 11531004, Mangum acknowledges the support of a summer grant at Niagara University,
and Misra acknowledges the support of Simons Foundation grants no. 307555 and 636482}
\keywords{Lie algebras, toroidal Lie algebras, Dynkin diagram automorphisms, fermionic realization}
\subjclass[2010] {Primary: 17B67 }


\begin{abstract}
In this paper, we construct a fermionic realization of the
twisted toroidal Lie algebra of type $A_{2n-1}, D_{n+1}, A_{2n}$ and $D_4$ based on the newly found
Moody-Rao-Yokonuma-like presentation.
\end{abstract}

\maketitle

\section{Introduction} \label{intro}

Affine Kac-Moody Lie algebras \cite{K} are important algebraic structures widely used in mathematics and theoretical physics.
Their introduction was partly due to Kac's study of finite order automorphisms of
simple Lie algebras, and these automorphisms have also led to the notion of twisted affine Lie algebras.
Several important generalizations of affine Lie algebras have been proposed, among them some of the well-known ones are
loop algebras of Kac-Moody Lie algebras \cite{F}, extended affine Lie algebras \cite{ABGP}, toroidal Lie algebras \cite{MRY}
and their multi-loop generalizations \cite{BK, S}. For some recent developments
see the survey \cite{NSW}.

The untwisted toroidal Lie algebras $T({\mathfrak g})$ are certain distinguished algebras
contained in the universal central extension \cite{BK} of the 2-loop algebras of the simple Lie algebra $\mathfrak g$ \cite{MRY}. They
are proved to be nontrivial via the vertex and fermionic representations \cite{FM, T1, T2} as well as
higher level free field realization \cite{JMT}. Other types of $T({\mathfrak g})$-modules have also been constructed in
various works \cite{BB, G, L, FJW}.

Fu and Jiang \cite{FJ} introduced twisted $n$-toroidal Lie algebras in the abstract setting
and studied their integrable modules. In \cite{EM} vertex representations of
general toroidal Lie algebras and Virasoro-toroidal Lie algebras have been considered.
In a recent work \cite{JMM}, we have given an MRY-like presentation for the twisted toroidal Lie algebra
associated with a diagram automorphism of the simple Lie algebra $\mathfrak g = A_{2n-1}, D_{n+1}, D_4$ and have shown that the MRY-like realization is indeed
the universal central extension of the corresponding twisted (baby) toroidal Lie
algebra.

In this paper, we revisit the MRY-like presentation in \cite{JMM} and give the MRY-like presentation for twisted toroidal Lie algebras of type $\mathfrak g = A_{2n-1}, D_{n+1}, A_{2n}, D_4$ in a uniform way. Then we use this new presentation to give a fermionic realization of
these twisted toroidal algebras. Explicitly we use
certain Clifford algebras to realize the twisted toroidal Lie algebras of types $A_{2n+1}, D_{n+1},  A_{2n}, D_4$
twisted by diagram automorphisms of order $2,2,2,3$ respectively. This construction is analogous to the fermionic construction given in the
untwisted situation \cite{JM}, which
in turn was an extension of the fermionic construction of Feingold-Frenkel \cite{FF} for affine Lie algebras; see also \cite{F1, KP} (note that $E_6$ also possesses a finite order diagram automorphism, but, being an exceptional Lie algebra, it was not treated in such works as \cite{FF, F1, JM}, and so is not considered herein).
The level one modules have the degree zero central element $c_1= \overline{t^{-1}dt}$ act as $0$, thus they can be viewed as interesting summation and lifting of level zero modules for the vertical affine Lie subalgebra $\mathfrak g \otimes_{\mathbb C}\mathbb C[t, t^{-1}]\oplus \mathbb Cc_1$. This phenomenon bears some similarity with the famous path construction of level one Fock modules for the affine Lie algebra \cite{DJKMO}, which was an intriguing summation of level zero modules and has led to further work on crystal bases (cf. \cite{JMi}).

The paper is organized as follows. In Section 2, we recall the Fu-Jiang twisted toroidal Lie algebra in a specialized setting.
In Section 3 we give MRY-presentation of twisted toroidal Lie algebras of types $A_{2n+1}, D_{n+1},  A_{2n}, D_4$ with detailed analysis of the
universal central extension for the toroidal Lie algebra. The fermionic free-field constructions of these twisted toroidal Lie algebras are given and proved in Section 4.

The authors would like to thank the referee for useful comments and suggestions which have improved the paper.

\section{Twisted Toroidal Lie Algebras}

Let $\g$ be the finite dimensional simple Lie algebra $A_{2n-1} , (n \geq 3)$,  $D_{n+1} ,  (n \geq 2)$, $A_{2n} , (n \geq 2)$ or $D_4$ over the field of complex numbers $\C$. We denote the Chevalley generators of $\g$ by $\{e_i', f_i', h_i' \mid 1 \leq i \leq N\}$ where $N = 2n - 1, n+1, 2n, 4,$ respectively.  Then $\h' = \text{span}\{h_i' \mid 1 \leq i \leq N\}$ is the Cartan
subalgebra of $\g$. Let $\{\al_i' \mid 1 \leq i \leq N\} \subset \h'^*$  denote the simple roots, $\Delta$ be the set of roots for $\g$, and $Q$ be the root lattice. Note that
$\al_j'(h_i') = a_{ij}'$ where $A' = (a_{ij}')_{i,j = 1}^N$ is the Cartan matrix associated with $\g$. Let $( \ | \ )$ be the nondegenerate symmetric invariant bilinear form on $\g$ defined by $( x | y ) = tr(xy), \frac{1}{2}tr(xy),  tr(xy), \frac{1}{2}tr(xy)$ for all $x, y \in \g$. Then
$( h_i' | h_i' ) = 2 , 1 \leq i \leq N$. Since the Lie algebra $\g$ is simply-laced, we can identify the invariant form on $\h'$ to that on
the dual space $\h'^*$ and normalize the inner product by $( \al | \al ) = 2, \al \in \Delta$.

Let $\Gamma$ denote the Dynkin diagram for $\g$ and $\si$ be the following map on indices of order $r = 2, 2, 2, 3$ respectively:
\begin{eqnarray*}
&&\sigma(i)=N-i+1, i=1, \cdots , N, \ \ \mbox{for type } \ A_{2n-1}  \ or \ A_{2n} \nonumber \\
&&\sigma(i)=i, i=1, \cdots , n-1=N-2; \sigma(n)=n+1,
\ \ \mbox{for type} \ D_{n+1} \nonumber\\
&&\sigma(1,2,3,4)=(3,2,4,1)\ \ \mbox{for type} \
D_4.\nonumber
\end{eqnarray*}


Then $\si$ induces an automorphism of $\Gamma$ via $\si(h_i') = h_{\si(i)}'$ and the Lie algebra $\g$ is decomposed as a ${\Z}/r{\Z}$-graded
Lie algebra:
$${\g}={\g}_0\oplus \cdots\oplus{\g}_{r-1},$$
where ${\g}_i=\{ x\in {\g}| \sigma(x)=\omega^i x\}$ and $
\omega=e^{2\pi\sqrt{-1}/r}$. It is well-known that
the subalgebra ${\g}_0$ is the simple Lie algebra of types
$C_n$, $B_n$, $B_n$ and $G_2$ respectively. Let $I = \{1, 2, \cdots , n\}$ for $\g = A_{2n-1}, D_{n+1}, A_{2n}, D_4$ where $n=2$ for $\g = D_4$.
The Chevalley generators $\{e_i, f_i, h_i \mid i \in I\}$ of $\g_0$ are given by:
\begin{eqnarray*}
&&e_i=e_i', f_i=f_i', h_i=h_i', \ \mbox{if } \sigma(i)=i; \\
&&e_i=\sum_{j=0}^{r-1}e'_{\sigma^j(i)}, \
f_i=\sum_{j=0}^{r-1}f'_{\sigma^j(i)}, \
h_i=\sum_{j=0}^{r-1}h'_{\sigma^j(i)}, \ \mbox{if } \sigma(i)\neq i, \text{ except } i=n \text{ for } A_{2n}. \\
&&e_n=\sqrt{2}(e'_n+e'_{n+1}),
f_n=\sqrt{2}(f'_n+f'_{n+1}),
h_n=2(h'_n+h'_{n+1}) \ \mbox{for } A_{2n}.
\end{eqnarray*}

The Cartan subalgebra of $\g_0$ is $\h_0 = \text{span}\{h_i \mid i \in I\}$ and the simple roots $\{\al_i \mid i \in I\}\subset \h_0^*$ are given by:
$$
\al_i = \frac{1}{r}\sum_{s=0}^{r-1}\al'_{\si^s(i)}.
$$

Then we have
\begin{equation}
(\alpha_i|\alpha_j)=d_ia_{ij},
\ \ \mbox{for all} \ i,j\in I
\end{equation}
where $A = (a_{ij})_{i,j \in I}$ is the Cartan matrix for $\g_0$ and $(d_1, \cdots, d_n)=(1/2, \cdots, 1/2, 1)$, $(1, \cdots, 1, 1/2)$, $(1/2, \cdots, 1/2, 1/4)$
or $(1/3, 1)$,
for $\g = A_{2n-1}, D_{n+1}, A_{2n}$
or  $D_4$ respectively.
Note that $A = (a_{ij})_{i,j \in I}$ is given as follows:
\begin{equation*}
\begin{cases}
a_{ii} = 2, \\
a_{12} = -3, a_{21} = -1, \g_0 = G_2\\
a_{n-1,n} = -2, a_{n,n-1} = -1, \g_0 = C_n\\
a_{n-1,n} = -1, a_{n,n-1} = -2, \g_0 = B_n\\
a_{i,i+1} = a_{i+1,i} = -1, \g_0 \neq G_2 \text{ and } i \neq n-1\\
a_{ij} = 0, \text{otherwise}.
\end{cases}
\end{equation*}

Denote
\begin{eqnarray*}
\theta^0 =\left\{\begin{array}{ll}
\a'_1+\cdots+\a'_{2n-2}  \ \mbox{for } A_{2n-1}, \\
\a'_1+\a'_2+\cdots+\a'_n  \ \mbox{for } D_{n+1}, \\
\a'_1+\cdots+\a'_{2n} \ \mbox{for } A_{2n}, \\
\a'_1+\a'_2+\a'_3 \ \mbox{for } D_4.
\end{array}\right.
\end{eqnarray*}
Let $e'_{\theta^0}, f'_{\theta^0}, h'_{\theta^0}$ denote the $\mathfrak{sl}_2$-triplet associated to $\theta^0$ with bracket $[h'_{\theta^0}, e'_{\theta^0}] = 2 e'_{\theta^0}, [h'_{\theta^0}, f'_{\theta^0}] = -2 f'_{\theta^0}$ and $h'_{\theta^0} = [e'_{\theta^0} , f'_{\theta^0}]$.

Let $\cA = \C[s, s^{-1}, t, t^{-1}]$ be the ring of Laurent polynomials in the commuting variables $s, t$ and $L(\g) = \g \otimes_{\C} \cA$  be the multi-loop algebra with the Lie bracket given by:

$$
[x \otimes s^jt^m , y \otimes s^kt^l] = [x, y] \otimes s^{j+k}t^{m+l},
$$
for all $x, y \in \g, j, k, m, l \in \Z$. For $j \in \Z$ we define $0 \leq \bar{j} < r$ such that $j \equiv \bar{j} \ \mbox{mod} \ r$. For all $j \in \Z$ we define $\g_j = \g_{\bar{j}}$. We extend the automorphism $\si$ of $\g$ to an automorphism $\bar{\si}$ of $L(\g)$ by defining:
$$
\bar\si(x \otimes s^jt^m) = \omega^{-m} \si(x) \otimes s^jt^m
$$
where $x \in \g, j, m \in \Z$. We denote the $\bar\si$ fixed points of $L(\g)$ by $L(\g, \si)$.
Note that the subalgebra $L(\g, \si)$ has the $\Z$-gradation:
$$
L(\g, \si) = \oplus_{m \in \Z} L(\g, \si)_m,
$$
where $L(\g, \si)_m = \g_m \otimes \cA_m, \cA_m = \mbox{span}_{\C}\{s^jt^m \mid j \in \Z\}=t^m\C[s, s^{-1}]$.

Set $F = \cA \otimes \cA$. Then $F$ is a two sided $\cA$-module via the action $a(b_1 \otimes b_2) = ab_1 \otimes b_2 = (b_1 \otimes b_2)a$ for all $a, b_1, b_2 \in \cA$. Let $G$ be the $\cA$-submodule of $F$ generated by $\{1 \otimes ab -a\otimes b -b \otimes a \mid a, b \in \cA\}$. The $\cA$- quotient module $\Omega_{\cA} = F/G$ is called the $\cA$ - module of K{\"a}hler differentials.  The canonical quotient map $d : \cA \longrightarrow \Omega_{\cA}$ given by $da = (1 \otimes a) + G, a \in \cA$ is the differential map. Let $- : \Omega_{\cA} \longrightarrow
\Omega_{\cA}/d\cA = \cK'$ be the canonical linear map. Since $\overline{d(ab)} = 0$, we have $\overline{a(db)} = - \overline{(da)b} = - \overline{b(da)}$ for all  $a, b \in \cA$. Then $\cK' = span_{\C}\{\overline{bda} \mid a, b \in \cA\}$. Set $\cK = span_{\C}\{\overline{bda} \mid a \in \cA_k, b \in \cA_l , k+l \equiv 0 (mod r)\}$ which is a subalgebra of $\cK'$. We note that $\{\overline{s^{j-1}t^mds}, \overline{s^jt^{-1}dt}, \overline{s^{-1}ds}\mid j \in \Z, m \in \Z_{\neq 0}\}$ is a basis for $\cK$ and the following relations are easy to check.
\begin{equation}\label{kahlercalc}
\overline{s^{\ell}ds^k} = \delta_{k, -\ell} k \overline{s^{-1}ds}, \ \
 \overline{s^{\ell}t^{-1}d(s^{k}t)} = \delta_{k, -\ell} k \overline{s^{-1}ds} + \overline{s^{k + \ell}t^{-1}dt}.
\end{equation}
The elements $c_0 = \overline{s^{-1}ds}, c_1= \overline{t^{-1}dt} \in \cK$ are called the degree zero central elements.
Let
$$
T(\g) = L(\g , \si) \oplus \cK,
$$
with the Lie bracket given by
\begin{equation*}
[x\otimes a, y\otimes b] = [x, y]\otimes ab + (x | y)\overline{bda}, \qquad [T(\g), \cK] = 0,
\end{equation*}
where $x \in \g_i, y \in \g_j, a \in \cA_i, b \in \cA_j$ for $i, j \in \Z$. Using \cite[Proposition 2.2]{BK}, it is shown \cite[Theorem 2.1]{FJ} that $T(\g)$, with the canonical projection map $\eta : T(\g) \rightarrow L(\g, \si)$, is the universal central extension of $L(\g, \si)$. $T(\g)$ is called the twisted toroidal Lie algebra of type $\g$. A representation of $T(\g)$ is said to be of level $(k_0, k_1)$ if $c_0$ acts as $k_0(id)$ and $c_1$ acts as
$k_1(id)$.

\section{MRY presentation of $T(\g)$}

In \cite{MRY}, Moody, Rao and Yokonuma gave a presentation of \emph{untwisted} toroidal Lie algebras which is analogous to the Drinfeld realization \cite{D} for quantum affine algebras. In this section we give an MRY type presentation for the \emph{twisted} toroidal Lie algebra $T(\g)$.

Denote $\tilde{I} = I \cup \{0\}$ and extend the Cartan matrix $A = (a_{ij})_{i,j \in I}$ to $\tilde{A} = (a_{ij})_{i,j \in \tilde{I}}$ by defining $a_{00} = 2$, $a_{02} = -1 = a_{20} \ \text{for} \ \g = A_{2n-1}$, $a_{01} = -2, a_{10} = -1 \ \text{for} \ \g = D_{n+1}$, $a_{01} = -1, a_{10} = -2 \ \text{for} \ \g = A_{2n}$, $a_{01} = -1 = a_{10} \ \text{for} \ \g = D_4$ and $a_{0j} = 0 = a_{j0}$ otherwise for all types. Note that $\tilde{A}$ is the Cartan matrix for the twisted affine algebra $\hat{\g}$ of type $A_{2n-1}^{(2)}, D_{n+1}^{(2)}, A_{2n}^{(2)}, D_4^{(3)}$, respectively.
Let $\{ \alpha_i | i \in \tilde{I} \}$, $\hat{Q}$, $\delta$ and $\hat{\Delta}$ denote the simple roots, root lattice, null root and set of roots, respectively for the twisted affine algebra $\hat{\g}$.

Let $t(\g)$ be the Lie algebra over $\C$ generated by symbols
$$\not{c},
 \al_m(k) \text{ and } X(\pm \al_m, k),$$
with $m \in \tilde{I}$ and $k \in \Z$, and satisfying the following relations:
\begin{itemize}
  \item $[\al_0(k), \al_0(l)] =
    \begin{cases}
      2r k \delta_{k,-\ell} \not{c} & (A_{2n-1}, D_{n+1}, D_4) \\
      2k \delta_{k,-\ell} \not{c} & (A_{2n})
    \end{cases}$ \\
  \item $[\al_0(k), \al_j(l)] =
    \begin{cases}
      r a_{0j} k \delta_{k,-\ell} \not{c} & (A_{2n-1}, A_{2n}, D_4) \\
      a_{0j} k \delta_{k,-\ell} \not{c} & (D_{n+1})
    \end{cases}$ \\
where $j \in I$.
  \item $[\al_i(k), \al_j(l)] =
    \begin{cases}
      r a_{ij} k \delta_{k,-\ell} \not{c} & (A_{2n-1}, A_{2n}, D_4) \\
      a_{ij} k \delta_{k,-\ell} \not{c} & (D_{n+1})
    \end{cases}$ \\
where $i,j \in I$ with $i \leq j$ and $(i, j) \neq (n-1, n), (n, n)$.
 \item $[\al_{n-1}(k), \al_n(l)] =
    \begin{cases}
      a_{n-1,n} k \delta_{k,-\ell} \not{c} & (A_{2n-1}, D_4) \\
      4 a_{n-1,n} k \delta_{k,-\ell} \not{c} & (A_{2n}) \\
      2 a_{n-1,n} k \delta_{k,-\ell} \not{c} & (D_{n+1})
    \end{cases}$ \\
 \item $[\al_n(k), \al_n(l)] =
    \begin{cases}
      2 k \delta_{k,-\ell} \not{c} & (A_{2n-1}, D_4) \\
      8 k \delta_{k,-\ell} \not{c} & (A_{2n}) \\
      4 k \delta_{k,-\ell} \not{c} & (D_{n+1})
    \end{cases}$
 \item $[\al_i(k), X(\pm \al_j, l)] =
      \pm a_{ij} X(\pm \al_j, k+l)$ \\
where $i,j \in \tilde{I}$
 \item $[X(\pm \al_i, k) , X(\pm \al_i, l) ] = 0 $ \\
where $i \in \tilde{I}$.
 \item $[X(\al_i, k) , X(-\al_j, l) ] = \\
    \begin{cases}
      \delta_{i,j} \big\{ \al_i(k+l) + (r - \delta_{i,n} (r-1)) k \delta_{k,-\ell} \not{c} \big \} & (A_{2n-1}, D_4) \\
      \delta_{i,j} \big\{ \al_i(k+l) + (r(1+\delta_{i,n}(r-1)) - \delta_{i,0}(r-1) ) k \delta_{k,-\ell} \not{c} \big \} & (A_{2n}) \\
      \delta_{i,j} \big\{ \al_i(k+l) + (1 + (\delta_{i,0}+\delta_{i,n}) (r-1)) k \delta_{k,-\ell} \not{c} \big \} & (D_{n+1})
    \end{cases}$  \\
where $i,j \in \tilde{I}$
 \item $\text{ad} X(\pm \al_i , k_2) X(\pm \al_j , k_1) = 0$ for $i,j \in \tilde{I}$ with $i \neq j$ and $a_{ij} = 0$.
 \item $\text{ad} X(\pm \al_i, k_3) \text{ad} X(\pm \al_i , k_2) X(\pm \al_j , k_1) = 0$ for $i,j \in \tilde{I}$ with $i \neq j$ and $a_{ij} = -1$
 \item $\text{ad} X(\pm \al_i, k_4) \text{ad} X(\pm \al_i , k_3) \text{ad} X(\pm \al_i , k_2) X(\pm \al_j , k_1) = 0$ for $i,j \in \tilde{I}$ with $i \neq j$ and $a_{ij} = -2$
 \item $\text{ad} X(\pm \al_i, k_5) \text{ad} X(\pm \al_i, k_4) \text{ad} X(\pm \al_i , k_3) \text{ad} X(\pm \al_i , k_2) X(\pm \al_j , k_1) = 0$ for $i,j \in \tilde{I}$ with $i \neq j$ and $a_{ij} = -3$
\end{itemize}
In addition, $\not{c}$ is central.

 Let $z,w,z_1,z_2,...$ be formal variables. We define formal power series with
 coefficients from the toroidal Lie algebra $t(\g)$:
 $$
 \al_i(z)=\sum_{n\in \Z}\al_i(n)z^{-n-1},
\qquad X(\pm \al_i,z)=\sum_{n\in \Z}X(\pm \al_i, n)z^{-n-1},
 $$
 for $i=0,1, \cdots , n$.
We will use the delta function
\begin{equation*}
\delta(z-w)=\sum_{n\in\Z}w^nz^{-n-1}
\end{equation*}

Using $\displaystyle \frac1{z-w}=\sum_{n=0}^{\infty}z^{-n-1}w^n$, $|z|>|w|$,
we have the following useful expansions:
\begin{align*}
\delta(z-w)&=\iota_{z, w}((z-w)^{-1})+\iota_{w, z}((w-z)^{-1}),\\
\partial_w\delta(z-w)&=
\iota_{z, w}((z-w)^{-2})-\iota_{w, z}((w-z)^{-2}),
\end{align*}
where $\iota_{z, w}$ means the expansion in the region $|z|>|w|$. For simplicity
in the following we will drop $\iota_{z, w}$ if it is
clear from the context.

The above defining relations of $t(\g)$ can be written in terms of power series as follows.
\begin{enumerate}\label{twrelations}
  \item $[\al_0(z), \al_0(w)] =
    \begin{cases}
      2r \partial_w\delta(z-w) \not{c} & (A_{2n-1}, D_{n+1}, D_4) \\
      2 \partial_w\delta(z-w) \not{c} & (A_{2n})
    \end{cases}$ \\
  \item $[\al_0(z), \al_j(w)] =
    \begin{cases}
      r a_{0j} \partial_w\delta(z-w) \not{c} & (A_{2n-1}, A_{2n}, D_4) \\
      a_{0j} \partial_w\delta(z-w) \not{c} & (D_{n+1})
    \end{cases}$ \\
where $j \in I$.
  \item $[\al_i(z), \al_j(w)] =
    \begin{cases}
      r a_{ij} \partial_w\delta(z-w) \not{c} & (A_{2n-1}, A_{2n}, D_4) \\
      a_{ij} \partial_w\delta(z-w) \not{c} & (D_{n+1})
    \end{cases}$ \\
where $i,j \in I$ with $i \leq j$ and $(i, j) \neq (n-1, n), (n, n)$.
 \item $[\al_{n-1}(z), \al_n(w)] =
    \begin{cases}
      a_{n-1,n} \partial_w\delta(z-w) \not{c} & (A_{2n-1}, D_4) \\
      4 a_{n-1,n} \partial_w\delta(z-w) \not{c} & (A_{2n}) \\
      2 a_{n-1,n} \partial_w\delta(z-w) \not{c} & (D_{n+1})
    \end{cases}$ \\
 \item $[\al_n(z), \al_n(w)] =
    \begin{cases}
      2 \partial_w\delta(z-w) \not{c} & (A_{2n-1}, D_4) \\
      8 \partial_w\delta(z-w) \not{c} & (A_{2n}) \\
      4 \partial_w\delta(z-w) \not{c} & (D_{n+1})
    \end{cases}$
 \item $[\al_i(z), X(\pm \al_j, w)] =
      \pm a_{ij} X(\pm \al_j, w) \delta(z-w)$ \\
where $i,j \in \tilde{I}$
 \item $[X(\pm \al_i, z) , X(\pm \al_i, w) ] = 0 $ \\
where $i \in \tilde{I}$.
 \item $[X(\al_i, z) , X(-\al_j, w) ] = \\
    \begin{cases}
      \delta_{i,j} \big\{ \al_i(w) \delta(z-w) + (r - \delta_{i,n} (r-1)) \partial_w\delta(z-w) \not{c} \big \} \qquad (A_{2n-1}, D_4) \\
      \delta_{i,j} \big\{ \al_i(w) \delta(z-w) + (r(1+\delta_{i,n}(r-1)) - \delta_{i,0}(r-1) ) \partial_w\delta(z-w) \not{c} \big \} \, (A_{2n}) \\
      \delta_{i,j} \big\{ \al_i(w) \delta(z-w) + (1 + (\delta_{i,0}+\delta_{i,n}) (r-1)) \partial_w\delta(z-w) \not{c} \big \} \qquad (D_{n+1})
    \end{cases}$  \\
where $i,j \in \tilde{I}$
 \item $\text{ad} X(\pm \al_i , z_2) X(\pm \al_j , z_1) = 0$ for $i,j \in \tilde{I}$ with $i \neq j$ and $a_{ij} = 0$.
 \item $\text{ad} X(\pm \al_i, z_3) \text{ad} X(\pm \al_i , z_2) X(\pm \al_j , z_1) = 0$ for $i,j \in \tilde{I}$ with $i \neq j$ and $a_{ij} = -1$
 \item $\text{ad} X(\pm \al_i, z_4) \text{ad} X(\pm \al_i , z_3) \text{ad} X(\pm \al_i , z_2) X(\pm \al_j , z_1) = 0$ for $i,j \in \tilde{I}$ with $i \neq j$ and $a_{ij} = -2$
 \item $\text{ad} X(\pm \al_i, z_5) \text{ad} X(\pm \al_i, z_4) \text{ad} X(\pm \al_i , z_3) \text{ad} X(\pm \al_i , z_2) X(\pm \al_j , z_1) = 0$ for $i,j \in \tilde{I}$ with $i \neq j$ and $a_{ij} = -3$
\end{enumerate}

We define a $\Z \times \hat{Q}$ grading of $L(\g, \si)$ as follows:
\begin{center}
$\text{deg}(\si^ph'_i\otimes s^k) = (k, 0)$,\\
$\text{deg}(\si^pe'_i\otimes s^k) = (k, \alpha_i)$,\\
$\text{deg}(\si^pf'_i\otimes s^k) = (k, -\alpha_i)$,\\
$\text{deg}(\si^pf'_{\theta^0}\otimes s^kt) = (k, \alpha_0)$,\\
$\text{deg}(\si^pe'_{\theta^0}\otimes s^kt^{-1}) = (k, -\alpha_0)$,\\
\end{center}
for $0\leq p\leq r-1$, $i \in I$.

Following \cite{MRY} we define the  $\Z \times \hat{Q}$ grading of $t(\g)$ as follows.
\begin{center}
$\text{deg } \not{c} := (0, 0)$ \\
$\text{deg } \al_i (k) := (k, 0)$ \\
$\text{deg } X(\pm \al_i, k) := (k, \pm \alpha_i)$
\end{center}
for $i \in \tilde{I}$ and $k \in \Z$. We define  $\hat{Q}_{\pm}:=\pm \sum_{i=0}^{n} \Z_{\geq 0} \alpha_i  \backslash \{0\}$.
Denote by $\mf{t}_{k}^{\alpha}$ the subspace of $t(\g)$ spanned by elements of degree $(k, \alpha)$. Consider the following subspaces of $t(\g)$:

%
$\displaystyle \mf{t}_{k}^{\pm}:=\sum_{\alpha \in \hat{Q}_{\pm}} \mf{t}_{k}^{\alpha},
\hspace{7mm} \displaystyle \mf{t}_{k}:=\sum_{\alpha \in \hat{Q}} \mf{t}_{k}^{\alpha}$,

$\displaystyle \mf{t}^{\alpha}:= \sum_{k \in \Z} \mf{t}_{k}^{\alpha},
\hspace{7mm} \mf{t}^{\pm}:= \sum_{k \in \Z} \mf{t}_{k}^{\pm}$.

$\displaystyle \mf{s}_{k}^{\pm}:=\text{span}\left\{ [X(\pm \al_{m_j},k_j), \ldots , X(\pm \al_{m_1},k_1)]  \mid m_i \in \tilde{I}, k_i \in \Z ,
\sum_{i=1}^{j}k_i= k\right\}$,

where $[X(\pm \al_{m_j},k_j), \ldots , X(\pm \al_{m_1},k_1)] = {\text{ad}}_ {X(\pm \al_{m_j},k_j)} \ldots {\text{ad}}_ {X(\pm \al_{m_2},k_2)}X(\pm \al_{m_1},k_1)$.


$\displaystyle \mf{s}^{\pm}:=\sum_{k \in \Z} \mf{s}_{k}^{\pm} , \, \mf{s}_{k}^{0}:=\text{span}\left\{ \delta_{k,0}\not{c}, \al_{i} (k) \mid i \in \tilde{I}, k \in \Z \right\}$,

and

$\displaystyle \mf{s}^{0}:=\sum_{k \in \Z} \mf{s}_{k}^{0}, \hspace{7mm} \mf{s}:=\mf{s}^{-}+\mf{s}^{0}+\mf{s}^{+}$.

We observe that $X(\pm \al_i, k) \in \mf{s}_{k}^{\pm}$ for each $i \in \tilde{I}$ and $\mf{s} \subset t(\g)$. The following result is an analog
of Lemma 3.1 in \cite{MRY} and follows similarly.

\begin{lemma}
\label{stlemma}
We have
\begin{enumerate}
 \item $\mf{t}_{k}^{\pm}=\mf{s}_{k}^{\pm}$, \, $\mf{t}^{\pm}=\mf{s}^{\pm}$, and $t(\g)=\mf{s}$.
 \item $\mf{t}_{k}=\mf{t}_{k}^{-}+\mf{t}_{k}^{0}+\mf{t}_{k}^{+}$ and $t(\g)=\mf{t}^{-}+\mf{t}^{0}+\mf{t}^{+}$.
\end{enumerate}
\end{lemma}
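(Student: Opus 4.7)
The strategy follows the template of Moody--Rao--Yokonuma \cite[Lemma 3.1]{MRY}: the main task is to prove $t(\g)=\mf{s}$, after which the $\Z\times\hat{Q}$-graded statements follow from weight considerations. Any iterated bracket $[X(\pm\al_{m_j},k_j),\ldots,X(\pm\al_{m_1},k_1)]$ has a well-defined $\hat{Q}$-weight $\pm(\al_{m_1}+\cdots+\al_{m_j})\in\hat{Q}_\pm$ whose height equals the length $j$; moreover, $\mf{s}^0$ is supported entirely in weight $0$. Hence $\mf{s}^+,\mf{s}^0,\mf{s}^-$ have mutually disjoint weight supports, the sum $\mf{s}=\mf{s}^-+\mf{s}^0+\mf{s}^+$ is direct, and $\mf{s}_k^\pm\subset\mf{t}_k^\pm$, $\mf{s}_k^0\subset\mf{t}_k^0$ are automatic from the $\Z$-grading.

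To prove $t(\g)=\mf{s}$ it suffices to show that $\mf{s}$ is closed under the Lie bracket, since $\mf{s}$ visibly contains every generator $\not{c},\al_i(k),X(\pm\al_i,k)$. Among the various cases, $[\mf{s}^0,\mf{s}^0]\subset\mf{s}^0$ is immediate from relations (1)--(5); $[\mf{s}^\pm,\mf{s}^\pm]\subset\mf{s}^\pm$ because any bracket of two left-nested iterated brackets unfolds via the Jacobi identity into a sum of left-nested iterated brackets; and $[\mf{s}^0,\mf{s}^\pm]\subset\mf{s}^\pm$ is handled by induction on the length of the iterated bracket in $\mf{s}^\pm$, expanding $[\al_i(k),Y]$ via Jacobi and applying relation (6), $[\al_i(k),X(\pm\al_j,l)]=\pm a_{ij}X(\pm\al_j,k+l)$, at the outermost layer.

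The main obstacle is $[\mf{s}^+,\mf{s}^-]\subset\mf{s}$. I prove by strong induction on $j+m$ that $[A,B]\in\mf{s}$ whenever $A$ is an iterated bracket in $\mf{s}^+$ of length $j$ and $B$ an iterated bracket in $\mf{s}^-$ of length $m$. The base case $j=m=1$ is exactly relation (8). For the inductive step I may assume $j\geq 2$ by symmetry, write $A=[X_j,A']$ with $A'$ of length $j-1$, and expand $[A,B]=[X_j,[A',B]]-[A',[X_j,B]]$. The induction hypothesis applies to both inner brackets; combined with the disjoint-weight-support observation, each of $[A',B]$ and $[X_j,B]$ is either zero or a pure-weight element lying in exactly one of $\mf{s}^+,\mf{s}^0,\mf{s}^-$, and in the $\mf{s}^\pm$ case its length is forced to equal the absolute height of its weight. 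A direct count then shows that each surviving outer bracket either falls under the already verified closures $[\mf{s}^+,\mf{s}^++\mf{s}^0]\subset\mf{s}^+$ and $[\mf{s}^-,\mf{s}^-+\mf{s}^0]\subset\mf{s}^-$, or has total length strictly less than $j+m$ and is therefore covered by the induction.

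With $t(\g)=\mf{s}$ in hand, part (1) is immediate: given $v\in\mf{t}_k^\pm$ of pure weight in $\hat{Q}_\pm$, the decomposition $v=v^-+v^0+v^+$ in $\mf{s}^-+\mf{s}^0+\mf{s}^+$ must concentrate on the $\mf{s}^\pm$-summand by weight support and on the $\Z$-degree $k$ piece, yielding $v\in\mf{s}_k^\pm$; summing over $k$ gives $\mf{t}^\pm=\mf{s}^\pm$. Part (2) follows similarly by decomposing any $v\in\mf{t}_k$ inside $\mf{s}$ and identifying the three summands with $\mf{t}_k^-,\mf{t}_k^0,\mf{t}_k^+$ via the $\Z\times\hat{Q}$-grading. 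The principal technical subtlety is the length bookkeeping in the $[\mf{s}^+,\mf{s}^-]$ induction, but it is manageable precisely because the length of a pure-weight iterated bracket is forced by its weight, which keeps the induction parameter $j+m$ strictly decreasing in every branch that contributes nontrivially.
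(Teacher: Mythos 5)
Your proof is correct and follows essentially the same route as the paper, which gives no independent argument but simply notes that the lemma is an analog of Lemma 3.1 of \cite{MRY} and ``follows similarly''; your reconstruction of that argument --- closure of $\mf{s}$ under the bracket, with the case $[\mf{s}^{+},\mf{s}^{-}]\subset\mf{s}$ handled by induction on total length using relation (8) as the base case and the weight/height bookkeeping to keep the induction parameter decreasing --- is exactly the intended one. The graded statements in parts (1) and (2) then follow from $t(\g)=\mf{s}$ and the disjointness of the weight supports of $\mf{s}^{-},\mf{s}^{0},\mf{s}^{+}$, as you say.
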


Denote by $t_{0}(\g)$ the subalgebra of $t(\g)$ generated by $\al_{i} (0), X(\pm \al_{i}, 0)$ for $i \in \tilde{I}$. Then $t_{0}(\g)$ satisfies the relations for the twisted affine algebra $\hat{\g} =A_{2n-1}^{(2)}, D_{n+1}^{(2)}, A_{2n}^{(2)}$, or $D_4^{(3)}$ and in fact $\hat{\g} \cong t_{0}(\g)$. The following result is an analog of Proposition 3.2 in \cite{MRY} which can be proved by similar argument.

\begin{lemma}
\begin{equation*}\label{dimtkalpha}
\text{dim}(\mf{t}_{k}^{\alpha})=
\begin{cases}
 1 \text{ if } \alpha \in \hat{\Delta}^{\text{re}} \\
 0 \text{ if } \alpha \notin \hat{\Delta}
\end{cases}
\end{equation*}
where $\hat{\Delta}^{\text{re}}$ is the set of real roots.
\end{lemma}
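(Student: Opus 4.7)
The plan is to adapt the proof of \cite[Proposition 3.2]{MRY} to the twisted setting, treating the upper and lower bounds on $\dim(\mf{t}_{k}^{\alpha})$ separately. For the upper bound, by Lemma~\ref{stlemma} every element of $\mf{t}_{k}^{\alpha}$ with $\alpha \in \hat{Q}_{\pm}$ is a linear combination of iterated brackets $[X(\pm \al_{m_{j}}, k_{j}), \ldots, X(\pm \al_{m_{1}}, k_{1})]$ with $\sum_{i}(\pm \al_{m_{i}}) = \alpha$ and $\sum_{i} k_{i} = k$. When $\alpha \notin \hat{\Delta}$, any such expression must at some stage perform more than $1 - a_{ij}$ adjoint actions of $X(\pm \al_{i}, \cdot)$ on $X(\pm \al_{j}, \cdot)$, so the Serre-type relations (items~(9)--(12)) force the bracket to vanish and $\mf{t}_{k}^{\alpha} = 0$. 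When $\alpha \in \hat{\Delta}^{\text{re}}$, I would use the Serre-type relations to reduce any distribution of the $k_{i}$'s (subject to $\sum_{i} k_{i} = k$) to a single canonical representative in which the entire grading is concentrated in one factor. Because $t_{0}(\g) \cong \hat{\g}$ and real root spaces of $\hat{\g}$ are one-dimensional, this bounds $\dim(\mf{t}_{k}^{\alpha}) \leq 1$.

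For the lower bound, I would use a surjection $\phi : t(\g) \twoheadrightarrow T(\g)$ obtained by matching the generators of $t(\g)$ against the $\bar{\si}$-invariant generators of $L(\g, \si) \subseteq T(\g)$: namely $\al_{i}(k)$ and $X(\pm \al_{i}, k)$ for $i \in I$ map to appropriate loop-algebra lifts of $h_{i}, e_{i}, f_{i}$ tensored with $s^{k}$, while $\al_{0}(k)$ and $X(\pm \al_{0}, k)$ map to $\bar{\si}$-invariant lifts built from $h_{\theta^{0}}', e_{\theta^{0}}', f_{\theta^{0}}'$ tensored with $s^{k} t^{\mp 1}$. One checks the defining relations of $t(\g)$ hold in $T(\g)$ via the bracket formula and the K\"ahler-differential identities in \eqref{kahlercalc}. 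Composing $\phi$ with $\eta : T(\g) \twoheadrightarrow L(\g, \si)$, the image of $\mf{t}_{k}^{\alpha}$ lies in the $(k, \alpha)$-graded component of $L(\g, \si)$, which is non-zero whenever $\alpha \in \hat{\Delta}^{\text{re}}$ because the centerless twisted affine subalgebra of $L(\g, \si)$ has one-dimensional real root spaces. Hence $\dim(\mf{t}_{k}^{\alpha}) \geq 1$, and combined with the upper bound this gives equality.

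The main obstacle is the normal-form reduction in the upper bound: one must verify that varying the distribution of the grading $k$ among the bracket factors does not yield linearly independent elements beyond a single representative. This requires combining the Serre-type relations with the graded analog of the exchange identity in \cite{MRY}, and must be checked uniformly for all four twisted types $A_{2n-1}^{(2)}$, $D_{n+1}^{(2)}$, $A_{2n}^{(2)}$, $D_{4}^{(3)}$. The short-root position at $i = n$ for $A_{2n}^{(2)}$ is expected to be the most delicate, owing to the additional factor-of-two scalings appearing in the corresponding central brackets.
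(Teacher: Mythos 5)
Your lower-bound argument is sound and is essentially the intended one: the paper constructs the graded surjection $\bar{\pi}\colon t(\g)\to L(\g,\si)$ immediately after this lemma, and the nonvanishing of $\mf{t}_k^{\al}$ for $\al\in\hat{\Delta}^{\text{re}}$ follows from surjectivity of such a graded homomorphism together with the one-dimensionality of the corresponding graded pieces of $L(\g,\si)$. The gap is in your upper bound. The paper proves this lemma ``by the same argument as Proposition 3.2 of \cite{MRY}'', and that argument is \emph{not} a normal-form reduction of iterated brackets: it proceeds by showing that the operators $\mathrm{ad}\,X(\pm\al_i,k)$ act locally nilpotently on $t(\g)$ (this is what the Serre-type relations (9)--(12), combined with Lemma \ref{stlemma}, actually deliver), so that each $\mf{t}_k$ becomes an integrable module over $t_0(\g)\cong\hat{\g}$ and its weight multiplicities are invariant under the Weyl group $W$ of $\hat{\g}$. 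For $\al\in\hat{\Delta}^{\text{re}}$ one then conjugates $\al$ to a simple root $\al_i$, where $\mf{t}_k^{\al_i}=\C X(\al_i,k)$ has dimension at most one because a $j$-fold bracket in $\mf{s}_k^{\pm}$ has weight of height $j$; for $\al\notin\hat{\Delta}$ one combines $W$-invariance with the standard support/height argument to conclude that the weight space vanishes.

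Your proposed replacement for this step --- that any bracket of non-root weight ``must at some stage perform more than $1-a_{ij}$ adjoint actions, so the Serre-type relations force vanishing'' --- does not go through as stated: relations (9)--(12) annihilate only brackets of the literal form $(\mathrm{ad}\,X(\pm\al_i,\cdot))^{1-a_{ij}}X(\pm\al_j,\cdot)$, and a general left-normed bracket of non-root weight need not contain such a substring; one must first rearrange with the Jacobi identity, and showing that every non-root weight space dies this way is exactly the nontrivial content that the integrability/Weyl-group argument is designed to supply. Likewise, the ``single canonical representative'' reduction for real roots --- which you yourself flag as the main obstacle --- is precisely the step the Weyl conjugation to a simple root avoids; carrying it out directly, uniformly over the four types and over all distributions of the $\Z$-degree $k$ among the factors, would amount to reproving local nilpotency by hand. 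So the proposal correctly identifies the two halves of the proof, but the harder half rests on an argument that does not work in the form given; replace it with the local-nilpotency and Weyl-invariance argument of \cite{MRY}.
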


Define the map $\bar{\pi} : t(\g) \longrightarrow L(\g, \si)$ as follows:\\
In types $A_{2n-1}, D_{n+1}, D_4$,
\begin{equation*}\label{piADD}
\begin{cases}
 \not{c} \mapsto 0 \\
 \al_{0} (k) \mapsto \sum_{p=0}^{r-1} \sigma^{p} (-h'_{\theta^0}) \otimes s^{k} \\
 \al_{i} (k) \mapsto \left(1 - \delta_{i,\sigma(i)} \big(1-\frac{1}{r} \big) \right) \sum_{p=0}^{r-1} \sigma^{p} (h'_{i}) \otimes s^{k} \\
 X(\al_{0}, k) \mapsto \sum_{p=0}^{r-1} - \sigma^{p} (\omega^{r-p} f'_{\theta^0}) \otimes (s^{k} t) \\
 X(-\al_{0}, k) \mapsto \sum_{p=0}^{r-1} - \sigma^{p} (\omega^{p} e'_{\theta^0}) \otimes (s^{k} t^{-1}) \\
 X(\al_{i}, k) \mapsto \left(1 - \delta_{i,\sigma(i)} \big(1-\frac{1}{r} \big) \right) \sum_{p=0}^{r-1} \sigma^{p} (e'_{i}) \otimes s^{k} \\
 X(-\al_{i}, k) \mapsto \left(1 - \delta_{i,\sigma(i)} \big(1-\frac{1}{r} \big) \right) \sum_{p=0}^{r-1} \sigma^{p} (f'_{i}) \otimes s^{k}
\end{cases}
\end{equation*}

In type $A_{2n}$,
\begin{equation*}\label{piA2}
\begin{cases}
 \not{c} \mapsto 0 \\
 \al_{0} (k) \mapsto -h'_{\theta^0} \otimes s^{k} \\
 \al_{i} (k) \mapsto \left(1 + \delta_{i,n} \right) \sum_{p=0}^{r-1} \sigma^{p} (h'_{i}) \otimes s^{k} \\
 X(\al_{0}, k) \mapsto - f'_{\theta^0} \otimes (s^{k} t) \\
 X(-\al_{0}, k) \mapsto - e'_{\theta^0} \otimes (s^{k} t^{-1}) \\
 X(\al_{i}, k) \mapsto \left(1 + \delta_{i,n} (\sqrt{2}-1) \right) \sum_{p=0}^{r-1} \sigma^{p} (e'_{i}) \otimes s^{k} \\
 X(-\al_{i}, k) \mapsto \left(1 + \delta_{i,n} (\sqrt{2}-1) \right) \sum_{p=0}^{r-1} \sigma^{p} (f'_{i}) \otimes s^{k}
\end{cases}
\end{equation*}

The following theorem shows that $t(\g)$ is a realization of the twisted toroidal Lie algebra
$T(\g)$.

\begin{theorem}\label{MRY}
The map $\bar{\pi}$ is a surjective homomorphism, the kernel of $\bar{\pi}$ is contained in the center $Z(t(\g))$ and $(t(\g), \bar{\pi})$ is the universal central extension of $L(\g, \si)$.
\end{theorem}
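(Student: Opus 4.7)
The plan is to establish the theorem in three stages: showing $\bar\pi$ is a surjective Lie algebra homomorphism, showing $\ker\bar\pi$ lies in the center of $t(\g)$, and then deducing universality from the already-known fact that $T(\g)$ is the universal central extension.

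First, I would verify that $\bar\pi$ respects every defining relation of $t(\g)$. For the Heisenberg-type brackets $[\al_i(k),\al_j(l)]$, applying $\bar\pi$ reduces to $[h'_i\otimes s^k, h'_j\otimes s^l]$-type brackets in $L(\g,\si)$, which by the formula $[x\otimes a,y\otimes b]=[x,y]\otimes ab+(x|y)\overline{bda}$ and the identities (\ref{kahlercalc}) produce exactly the scalar multiples of $c_0$ prescribed in each case. The normalization constants $d_i$, the factors $1-\delta_{i,\si(i)}(1-1/r)$, and the $\sqrt{2}$ in the $A_{2n}$ node are precisely what is needed to match the case-dependent coefficients on the right-hand side. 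The Serre-type relations and $[X(\al_i,k),X(-\al_j,l)]$ relations are verified similarly, using that the $k=0$ part corresponds to the Chevalley generators of the affine Kac-Moody algebra $\hat\g\cong t_0(\g)$. Surjectivity follows because $\bar\pi$ hits all $h'_i\otimes s^k$, $e'_i\otimes s^k$, $f'_i\otimes s^k$ and the affine-node generators $e'_{\theta^0}\otimes s^k t^{-1}$, $f'_{\theta^0}\otimes s^k t$, which together generate $L(\g,\si)$ as a Lie algebra.

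Next I would show $\ker\bar\pi\subseteq Z(t(\g))$ using the $\Z\times\hat Q$-grading. Because $\bar\pi$ preserves this grading, $\ker\bar\pi$ is graded. By Lemma \ref{dimtkalpha}, on real root components $\mf{t}_k^\alpha$ with $\alpha\in\hat\Delta^{\mathrm{re}}$ the space is one-dimensional, and a direct check shows $\bar\pi$ is nonzero there, so it is injective on these components; on components with $\alpha\notin\hat\Delta$ the space is zero. Hence $\ker\bar\pi\subseteq\bigoplus_k\mf{t}_k^0$. For $x\in\mf{t}_k^0\cap\ker\bar\pi$, the key observation is that the relation $[X(\al_i,m),X(-\al_i,l-m)]$ expresses $\al_i(l)$ as a commutator of real root vectors modulo a central term in $\not c$. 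Since $[x,X(\pm\al_j,n)]\in\mf{t}_{k+n}^{\pm\al_j}\cap\ker\bar\pi=0$ by the real-root injectivity just established, the Jacobi identity forces $[x,\al_i(l)]=0$ for all $i,l$; combined with $\not c$ being central, this shows $x\in Z(t(\g))$.

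For universality, [FJ, Theorem 2.1] (cited in the excerpt) says $T(\g)$ is the universal central extension of $L(\g,\si)$. Since $\bar\pi:t(\g)\to L(\g,\si)$ is a central extension, universality provides a unique $\psi:T(\g)\to t(\g)$ with $\bar\pi\circ\psi=\eta$. Surjectivity of $\psi$ follows from the standard argument: $t(\g)$ is perfect (each $\al_i(k)$ and $\not c$ arise as commutators of the $X(\pm\al_j,\cdot)$ via the identities above), and $\ker\bar\pi$ is central, so $t(\g)=[t(\g),t(\g)]=[\psi(T(\g))+\ker\bar\pi,\psi(T(\g))+\ker\bar\pi]\subseteq\psi(T(\g))$. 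For injectivity I would construct an explicit inverse $\Phi:t(\g)\to T(\g)$ by assigning the generators $\al_i(k),X(\pm\al_i,k),\not c$ to the natural elements of $L(\g,\si)\oplus\cK$ dictated by the formulas defining $\bar\pi$ (in particular $\not c$ to an appropriate rescaling of $c_0$), and verifying the defining relations of $t(\g)$ hold in $T(\g)$ using the bracket formula and (\ref{kahlercalc}).

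The main obstacle will be the bookkeeping in the $[X(\al_i,k),X(-\al_i,l)]$ relation and the $\al_i$-$\al_j$ Heisenberg relations in type $A_{2n}$: the coefficients $r,\,\delta_{i,n}(r-1),\,\delta_{i,0}(r-1)$ must match exactly the central contributions arising from $\overline{s^{l}ds^{k}}$ and $\overline{s^{l}t^{-1}d(s^{k}t)}$ after accounting for the rescalings $d_i$ in $(\alpha_i|\alpha_j)=d_ia_{ij}$ and the $\sqrt{2}$ in the identification of $e_n,f_n,h_n$ at the short node. Once these normalizations are tracked carefully, the verification of relations for both $\bar\pi$ and the inverse $\Phi$ is mechanical.
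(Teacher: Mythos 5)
Your overall strategy is essentially the paper's: an explicit lift of the generators into $T(\g)=L(\g,\si)\oplus\cK$ checked against the defining relations via (\ref{kahlercalc}), the $\Z\times\hat{Q}$-grading together with Lemma \ref{dimtkalpha} to locate the kernel, and the known universality of $(T(\g),\eta)$ from \cite{BK, FJ} to finish. The one genuinely different choice is in the last step. The paper never proves that $t(\g)$ and $T(\g)$ are isomorphic: it defines $\psi:t(\g)\to T(\g)$ (your $\Phi$), notes $\eta\psi=\bar\pi$, and then, given an arbitrary central extension $(\mathcal V,\gamma)$, simply takes the composite $\lambda\psi$ as the required lift of $\bar\pi$, so the universal property of $t(\g)$ is verified directly. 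You instead invoke universality of $T(\g)$ to get a map $T(\g)\to t(\g)$ and then argue it is an isomorphism, which forces you to additionally establish perfectness of $t(\g)$ and to show $\Phi$ really inverts it (the clean way is that $\Phi\circ\psi$ lifts $\mathrm{id}_{L(\g,\si)}$ and hence equals $\mathrm{id}_{T(\g)}$ by uniqueness of lifts from the universal object). Both routes are valid; the paper's avoids the extra work. A small conflation to fix: $\bar\pi$ lands in $L(\g,\si)$, which has no central elements, so the Heisenberg brackets become $0=\bar\pi(\not{c})$ there; the matching of coefficients against $c_0$ and $\overline{s^kt^{-1}dt}$ is the verification for the lift into $T(\g)$, not for $\bar\pi$ itself.

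There is one incorrect step in your centrality argument: the conclusion $\ker\bar\pi\subseteq\bigoplus_k\mf{t}_k^0$ does not follow and is in fact false. Lemma \ref{dimtkalpha} controls only the real-root components (dimension one, on which $\bar\pi$ is injective) and the non-root components (zero); it says nothing about the imaginary-root components $\mf{t}^{j\delta}$ with $j\neq 0$, and the kernel genuinely meets these --- the central elements $\overline{s^{j-1}t^m ds}$ with $m\neq 0$ sit in degree $m\delta$. The correct containment, as in the paper, is $\ker\bar\pi\subseteq\sum_{j\in\Z}\mf{t}^{j\delta}$. Your subsequent commutator argument does extend to an arbitrary $x\in\mf{t}_k^{j\delta}\cap\ker\bar\pi$, since $[x,X(\pm\al_i,m)]$ lies in a real-root component of the kernel and hence vanishes, so the gap is local and repairable, but as written the containment claim needs to be corrected before the rest goes through.
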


\begin{proof}

To see that $\bar{\pi}$ is surjective, we observe that by (\cite{K}, Theorem 8.3) $L(\g, \si)_0 = \{ x \otimes t^{k} \, | \, k \in \Z, x \in \g_{k} \}\cong \hat{\g}$. Hence $\bar{\pi}|_{\mf{t}_{0}(\mf{g})}$ maps onto $L(\g, \si)_0$. Therefore, $\bar{\pi}$ is surjective if it is a homomorphism which we show below.

We define the map $\psi : t(\g) \rightarrow T(\g)$ as follows.

For types $A_{2n-1}, D_{n+1}, D_4$ we define $\psi$ by:
\begin{equation}\label{psiADD}
\begin{cases}
 \not{c} \mapsto \overline{s^{-1}ds} \\
 \alpha_{0} (k) \mapsto \sum_{p=0}^{r-1} \sigma^{p} (-h'_{\theta^0}) \otimes s^{k} + \overline{s^{k}t^{-1}dt} \\
 \alpha_{i} (k) \mapsto \left(1 - \delta_{i,\sigma(i)} \big(1-\frac{1}{r} \big) \right) \sum_{p=0}^{r-1} \sigma^{p} (h'_{i}) \otimes s^{k} \\
 X(\alpha_{0}, k) \mapsto \sum_{p=0}^{r-1} - \sigma^{p} (\omega^{r-j} f'_{\theta^0}) \otimes (s^{k} t) \\
 X(-\alpha_{0}, k) \mapsto \sum_{p=0}^{r-1} - \sigma^{p} (\omega^{j} e'_{\theta^0}) \otimes (s^{k} t^{-1}) \\
 X(\alpha_{i}, k) \mapsto \left(1 - \delta_{i,\sigma(i)} \big(1-\frac{1}{r} \big) \right) \sum_{p=0}^{r-1} \sigma^{p} (e'_{i}) \otimes s^{k} \\
 X(-\alpha_{i}, k) \mapsto \left(1 - \delta_{i,\sigma(i)} \big(1-\frac{1}{r} \big) \right) \sum_{p=0}^{r-1} \sigma^{p} (f'_{i}) \otimes s^{k}
\end{cases}
\end{equation}

For type $A_{2n}$ we define $\psi$ by:
\begin{equation}\label{psiA2}
\begin{cases}
 \not{c} \mapsto \overline{s^{-1}ds} \\
 \alpha_{0} (k) \mapsto -h'_{\theta^0} \otimes s^{k} + \overline{s^{k}t^{-1}dt} \\
 \alpha_{i} (k) \mapsto \left(1 + \delta_{i,n} \right) \sum_{p=0}^{r-1} \sigma^{p} (h'_{i}) \otimes s^{k} \\
 X(\alpha_{0}, k) \mapsto - f'_{\theta^0} \otimes (s^{k} t^{-1}) \\
 X(-\alpha_{0}, k) \mapsto - e'_{\theta^0} \otimes (s^{k} t^{-1}) \\
 X(\alpha_{i}, k) \mapsto \left(1 + \delta_{i,n} (\sqrt{2}-1) \right) \sum_{p=0}^{r-1} \sigma^{p} (e'_{i}) \otimes s^{k} \\
 X(-\alpha_{i}, k) \mapsto \left(1 + \delta_{i,n} (\sqrt{2}-1) \right) \sum_{p=0}^{r-1} \sigma^{p} (f'_{i}) \otimes s^{k}
\end{cases}
\end{equation}

Note that the maps $\psi$ and $\bar{\pi}$ differ only on $\not{c}$ and $\alpha_0(k)$ by elements of $\cK$. Hence $\eta \psi = \bar{\pi}$. So $\bar{\pi}$ is a homomorphism if $\psi$ is so. It suffices to show that $\psi$ preserves the defining relations which can be shown by direct calculations. For example, using (\ref{kahlercalc}) for types $A_{2n-1}, D_{n+1}, D_4$, \\
$\displaystyle \big[ \psi \big( \alpha_{0} (k) \big ), \psi \big( \alpha_{j} (l) \big) \big]$ \\
$\displaystyle = \big[ \sum_{p=0}^{r-1} \sigma^{p} (-h'_{\theta^0}) \otimes s^{k} + \overline{s^{k}t^{-1}dt} ,
 \left(1 - \delta_{j,\sigma(j)} \big(1-\frac{1}{r} \big) \right) \sum_{q=0}^{r-1} \sigma^{q} (h'_{j}) \otimes s^{l} \big]$ \\
$\displaystyle = \left(1 - \delta_{j,\sigma(j)} \big(1-\frac{1}{r} \big) \right) \sum_{p,q=0}^{r-1} ( \sigma^{p} (-h'_{\theta^0}) | \sigma^{q} (h'_{j}) ) \overline{s^{l}ds^{k}}$. \\
By direct computation we have

$( \sigma^{p} (-h'_{\theta^0}) | \sigma^{q} (h'_{j}) ) =
 \begin{cases}
  -\delta_{1j} & p=q, (A_{2n-1}) \\
  \delta_{1j} - \delta_{2j} & p \neq q, (A_{2n-1}) \\
  -\delta_{1j} - \delta_{nj} & p=q, (D_{n+1}) \\
  -\delta_{1j} + \delta_{nj} & p \neq q, (D_{n+1}) \\
  -\delta_{1j} & p=q, (D_4) \\
  -\delta_{1j} & p \equiv q-1 \text{ (mod 3)}, (D_4) \\
  \delta_{1j} & p \equiv q-2 \text{ (mod 3)}, (D_4)
 \end{cases}$. \\
Hence, in type $A_{2n-1}$ we have \\
$\displaystyle \big[ \psi \big( \alpha_{0} (k) \big ), \psi \big( \alpha_{j} (l) \big) \big] = \left(1 - \delta_{j,\sigma(j)} \big(1-\frac{1}{r} \big) \right) r(-\delta_{1j} + \delta_{1j} - \delta_{2j}) \overline{s^{l}ds^{k}}$ \\
$\displaystyle = -r\delta_{2j} k \delta_{k,-l} \psi (\not{c}) = r a_{0j} k \delta_{k,-l} \psi (\not{c})$. \\
In type $D_{n+1}$ we have \\
$\displaystyle \big[ \psi \big( \alpha_{0} (k) \big ), \psi \big( \alpha_{j} (l) \big) \big] = \left(1 - \delta_{j,\sigma(j)} \big(1-\frac{1}{r} \big) \right) r(-\delta_{1j} - \delta_{nj} -\delta_{1j} + \delta_{nj}) \overline{s^{l}ds^{k}}$ \\
$\displaystyle = \frac{1}{r} r(-r\delta_{1j}) k \delta_{k,-l} \psi (\not{c}) = a_{0j} k \delta_{k,-l} \psi (\not{c})$. \\
In type $D_4$ we have \\
$\displaystyle \big[ \psi \big( \alpha_{0} (k) \big ), \psi \big( \alpha_{j} (l) \big) \big] = \left(1 - \delta_{j,\sigma(j)} \big(1-\frac{1}{r} \big) \right) r (-\delta_{1j} -  \delta_{1j} + \delta_{1j}) \overline{s^{l}ds^{k}}$ \\
$\displaystyle = r(-\delta_{1j}) k \delta_{k,-l} \psi (\not{c}) = r a_{0j} k \delta_{k,-l} \psi (\not{c})$. \\
For type $A_{2n}$, \\
$\displaystyle \big[ \psi \big( \alpha_{0} (k) \big ), \psi \big( \alpha_{j} (l) \big) \big]
= \big[ -h'_{\theta^0} \otimes s^{k} + \overline{s^{k}t^{-1}dt} ,
 \left(1 + \delta_{j,n} \right) \sum_{q=0}^{r-1} \sigma^{q} (h'_{j}) \otimes s^{l} \big]$ \\
$\displaystyle = \left(1 + \delta_{j,n} \right) \sum_{q=0}^{r-1} ( -h'_{\theta^0} | \sigma^{q} (h'_{j}) ) \overline{s^{l}ds^{k}}
= -r \delta_{1j} \overline{s^{l}ds^{k}} = r a_{0j} k \delta_{k,-l} \psi (\not{c})$.

Other calculations are similar. Thus  $\psi$ is a homomorphism. Hence $\bar{\pi}$ is a surjective homomorphism. Indeed, by definition of the $\Z \times \hat{Q}$ grading of $t(\g)$ and $L(\g, \si)$ we see that $\bar{\pi}$ is a graded homomorphism. By Lemma \ref{dimtkalpha}, $\bar{\pi} (\mf{t}^{\alpha}) \neq \{ 0 \}$ if $\alpha \in \hat{\Delta}^{\text{re}}$. Thus, $\bar{\pi} (\mf{t}^{\alpha}) = \{ 0 \}$ implies that $\alpha \in \hat{\Delta}^{\text{im}}$ or $\alpha \notin \hat{\Delta}$ where $\hat{\Delta}^{\text{im}}$ is the set of imaginary roots of $\hat{\g}$. Then since any imaginary root is an integer multiple of $\delta$, we have $\text{ker } \bar{\pi} \subset \sum_{j \in \Z} \mf{t}^{j \delta}$. As $\text{ker } \bar{\pi}$ is an ideal of $t(\g)$ and $[X(\pm \al_{i}, k) , \sum_{j \in \Z} \mf{t}^{j \delta}] \cap \sum_{j \in \Z} \mf{t}^{j \delta} = \{ 0 \}$ for all $i \in \tilde{I}$, we have $\text{ker } \bar{\pi} \subset Z(t(\g))$.

It is left to show that $(t(\g), \bar{\pi})$ is the universal central extension of $L(\g, \si)$. Suppose $(\mathcal{V} , \gamma)$ is a central extension of $L(\g , \si)$. Since $(T(\g), \eta)$ is the universal central extension of $L(\g , \si)$,  we have a unique map $\lambda : T(\g) \rightarrow \mathcal{V}$ such that $\gamma \lambda = \eta$. Now we have a homomorphism $\lambda\psi : t(\g) \longrightarrow \mathcal{V}$ and
$\gamma\lambda\psi = \eta\psi = \bar{\pi}$ giving the following commuting diagram:

\begin{center}
\begin{tikzpicture}[node distance=2cm, auto]
  \node (tauh) {$T(\g)$};
  \node (drinfeldt) [below of=tauh] {$t(\g)$};
  \node (V) [above of=tauh] {$\mathcal{V}$};
  \node (L) [right of=tauh] {$L(\g , \si)$};
  \draw[->] (tauh) to node {$\lambda$} (V);
  \draw[->] (tauh) to node {$\eta$} (L);
  \draw[->] (V) to node {$\gamma$} (L);
  \draw[->] (drinfeldt) to node {$\psi$} (tauh);
  \draw[->] (drinfeldt) to node [swap] {$\bar{\pi}$} (L);
\end{tikzpicture}
\end{center}

Since $T(\g)$ is the universal central extension of $L(\g, \si)$, the lower triangle commutes which implies that the map $\psi$ is unique
and proves that $(t(\g), \bar{\pi})$ is the universal central extension of $L(\g, \si)$.

\end{proof}

\section{Fermionic Representations}

In this section we use the MRY presentation of the twisted toroidal Lie algebra $T(\g)$ in the previous section and give a fermionic free field realization of $T(\g)$ for $\g = A_{2n-1}, D_{n+1}, A_{2n}$, and $D_4$.

We consider the vector space  $\C^{n+2}$
with the standard basis $\{\vep_i \mid i = 0,1, \cdots , n+1\}$. This is an orthonormal basis with respect to the inner product $(\ | \ )$ given by
\begin{center}
$(\vep_i | \vep_j) = \delta_{ij}$.
\end{center}
Consider the lattice $P_0= \Z\vep_1 \oplus \Z\vep_2 \oplus \cdots \oplus\Z\vep_n$ and set $c = \frac{1}{\sqrt{2}} (\varepsilon_0 + i \varepsilon_{n+1})$ and $d = \frac{1}{\sqrt{2}} (\varepsilon_0 - i \varepsilon_{n+1})$. Then $( c | c) = 0 = ( d | d)$ and $( c | d) = 1$.  The simple roots of the fixed point subalgebra $\g_0$ of $\g$ can be realized as follows.

\begin{itemize}
\item $\al_i = \frac{1}{\sqrt{2}}(\vep_i - \vep_{i+1}), 1 \leq i \leq n-1,  \al_n=\sqrt{2}\varepsilon_n$, for $(\g = A_{2n-1}, r=2)$;
\item $\al_i = \vep_i - \vep_{i+1}, 1 \leq i \leq n-1, \al_n=\vep_n$, for $(\g = D_{n+1}, r=2)$;
\item $\al_i = \frac{1}{\sqrt{2}}(\vep_i - \vep_{i+1}), 1 \leq i \leq n-1, \alpha_n=\frac{1}{\sqrt{2}}\vep_n$, for $(\g = A_{2n}, r=2)$;
\item $\al_1 = \frac{1}{\sqrt{3}}(\vep_1 - \vep_2), \alpha_2 = \frac{1}{\sqrt{3}}(-\vep_1 + 2\vep_2 - \vep_3)$ for $(\g = D_4, r=3)$.
\end{itemize}

Recall that ${\g}={\g}_0\oplus \cdots\oplus{\g}_{r-1},$ and $\g_1$ is an irreducible $\g_0$-module with highest weight

\begin{center}
$\displaystyle \theta_0 := \frac{1}{r}\sum_{j=0}^{r-1} \sigma^j (\theta^0) =
\begin{cases}
 \frac{1}{\sqrt{2}}\vep_1 + \frac{1}{\sqrt{2}}\vep_2& (A_{2n-1}); \\
 \vep_1 & (D_{n+1}); \\
 \sqrt{2}\vep_1 & (A_{2n}); \\
 \frac{1}{\sqrt{3}}\vep_1- \frac{1}{\sqrt{3}}\vep_3 & (D_4).
\end{cases}$
\end{center}
We define
\begin{center}
$\displaystyle \be :=
\begin{cases}
 -\sqrt{2}c +\vep_1  & (A_{2n-1}); \\
 -c+\vep_1 & (D_{n+1}); \\
 -\frac{1}{\sqrt{2}}c + \vep_1 & (A_{2n}); \\
 -\sqrt{3}c + \vep_1& (D_4).
\end{cases}$
\end{center}
Then set
\begin{center}
$\alpha_0 := c - \theta_0 =
\begin{cases}
  -\frac{1}{\sqrt{2}}(\beta+\vep_2) & (A_{2n-1}); \\
  -\beta & (D_{n+1}); \\
  -\sqrt{2}\beta  & (A_{2n}); \\
  - \frac{1}{\sqrt{3}}(\beta -\vep_3)  & (D_4).
\end{cases}$
\end{center}
It is easy to verify that $\{\a_i | 0\leq i \leq n\}$ form the set of simple roots of the twisted affine Lie algebra $\hat{\g}$ with the GCM
$\tilde{A} = (a_{ij})_{i,j \in \tilde{I}}$ and the nondegenerate invariant bilinear form given by
\begin{equation}
(\alpha_i|\alpha_j)=d_ia_{ij},
\ \ \mbox{for all} \ i,j\in \tilde{I},
\end{equation}
where
\begin{center}
$\displaystyle (d_0,d_1, \cdots , d_n) =
\begin{cases}
 (\frac{1}{2},\frac{1}{2}, \cdots , \frac{1}{2}, 1)& (A_{2n-1}); \\
 (\frac{1}{2}, 1, \cdots, 1, \frac{1}{2}) & (D_{n+1}); \\
 (1, \frac{1}{2}, \cdots , \frac{1}{2}, \frac{1}{4}) & (A_{2n}); \\
 (\frac{1}{3}, \frac{1}{3}, 1) & (D_4).
\end{cases}$
\end{center}
Furthermore, we observe that $(c | \al_i) = 0 = (d | \al_i), i \in \tilde{I}$  and $c$ (resp. $d$) corresponds to the null root $\delta$ (resp. dual gradation operator)
for $\hat{\g}$.

Now we consider the lattice
\begin{equation*}
P = P_0 \oplus \bar{P}_0\oplus \Z c
\end{equation*}
where $\bar{P}_0 = \oplus_{i=1}^{n}\Z \vep_{\bar{i}}$ and $(\vep_{\bar{i}} | \vep_{\bar{j}}) = \delta_{ij}, (c | \vep_{\bar{i}}) = (\vep_i | \vep_{\bar{j}}) =  0$. Let $P_{\C} = \C \otimes P$ be the $\C$-vector space spanned by $\{c, \vep_i, \vep_{\bar{i}} \mid 1 \leq i \leq n\}$. Then we define the vector space  $\mathcal C= P_{\mathbb C}\oplus P_{\mathbb C}^*$,
where both subspaces  $P_{\mathbb C}$ and
$P_{\mathbb C}^*$ are maximal isotropic subspaces
with the symmetric bilinear form on $\mathcal C$
given by
\begin{equation}
<b^*, a>=<a, b^*>=(a|b), \quad <a, b>=<a^*, b^*>=0,
\end{equation}
for all $a, b\in P_{\mathbb C}$.
Thus we have a maximal polarization of $\mathcal C$.

We consider the Clifford algebra $Cl(P)$ generated by the central element $\bf 1$ and elements $a(k), a^*(k)$ where $a \in P_{\C}, k \in \Za$
subject to the relations:
$$
\{a(k), b(l)\}=\{a^*(k), b^*(l)\}=0 , \{a(k), b^*(l)\}=(a|b)\delta_{k, -l}{\bf 1}
$$
where $a, b\in P_{\C}$.

We consider the representation space to be the infinite dimensional vector space
\begin{center}
$\displaystyle V := \bigotimes_{a} \left( \bigotimes_{k \in \Zp} \C [a(-k)] \bigotimes_{k \in \Zp} \C [a^* (-k)] \right)$
\end{center}
where the $a \in \{c, \vep_i, \vep_{\bar{i}} \mid 1 \leq i \leq n\}$. The Clifford algebra acts on $V$ by the following action: for $k \in \Zp$,
$a(-k)$ acts as a creation operator, $a(k)$ acts as an annihilation operator and ${\bf 1}$ as the identity. For any two fermionic fields

$$a(z) = \dsum_{m \in \Za}a(m)z^{-m-1/2} \ \  {\mbox{and}}  \ \  b(w) = \dsum_{n \in \Za}b(n)w^{-n-1/2}$$
we define the normal ordering $:a(z)b(w):$ by their components:
\begin{center}
$: \! a(m) b(n) \! : =
\begin{cases}
a(m)b(n), & \mbox{if } m < 0; \\
-b(n)a(m), & \mbox{if } m > 0.
\end{cases}$
\end{center}
Hence the normal ordering satisfies the relation
$$
:a(z)b(w): = - :b(w)a(z):,
$$
which implies $:a(z)a(z): = 0$. Hence $:c(z)\vep_i(w): = 0 = :c(z)\vep_{\bar{i}}(w):$ since $c$  is central. The normal product of $k$ fields is defined inductively by:
$$
:a_1(z_1)a_2(z_2) \cdots a_k(z_k): = :a_1(z_1)(:a_2(z_2) \cdots a_k(z_k):):.
$$
We define the contraction of two states by
$$
\underbrace{a(z)b(w)}=a(z)b(w)-:a(z)b(w):,
$$
which contains all poles for $a(z)b(w)$. In particular we have the following (see \cite{JM}, Proposition 3.1)
\begin{align*}
\underbrace{a(z)b(w)}&=\underbrace{a^*(z)b^*(w)}=0,\\
\underbrace{a(z)b^*(w)}&=\underbrace{a^*(z)b(w)}
=\frac{(a, b)}{z-w}.
\end{align*}
Hence the fermionic fields satisfy the following anticommutation relations (see \cite{JM}, Proposition 3.2):
\begin{align*}
\{a(z), b(w)\}&=\{a^*(z), b^*(w)\}=0,\\
\{a(z), b^*(w)\}&=(a, b)\delta(z-w).
\end{align*}

We use the following result to calculate the bracket among normal order products which can be obtained by Wick's theorem (see \cite{K1}, Theorem 3.3).
\begin{proposition}(\cite{JM}, Proposition 3.3)
\label{fermbracket}
For $a_1, b_1, a_2, b_2 \in \mathcal{C}$ and formal variables $z,w$, we have

$[ : \! a_1 (z) b_1 (z) \! : , : \! a_2 (w) b_2 (w) \! : ] $ \\
$ = \langle a_1 , b_2 \rangle : \! b_1 (w) a_2 (w) \! : \delta (z - w)
 - \langle a_1 , a_2 \rangle : \! b_1 (w) b_2 (w) \! : \delta (z - w)$ \\
$ + \langle b_1 , a_2 \rangle : \! a_1 (w) b_2 (w) \! : \delta (z - w)
 - \langle b_1 , b_2 \rangle : \! a_1 (w) a_2 (w) \! : \delta (z - w)$ \\
$ + (\langle a_1 , b_2 \rangle  \langle b_1 , a_2 \rangle -  \langle a_1 , a_2 \rangle  \langle b_1 , b_2 \rangle) \partial_w \delta (z - w)$.
\end{proposition}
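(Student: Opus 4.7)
The plan is to derive the identity as a direct application of Wick's theorem to the product of two quadratic normal-ordered fermionic fields, exploiting the uniform contraction rule $\underbrace{a(z)\,b(w)} = \langle a,b\rangle (z-w)^{-1}$ that holds for all $a,b\in\mathcal{C}$; the four separate cases of contractions recalled just before the proposition collapse to this single formula because $\langle\cdot,\cdot\rangle$ already vanishes on like-parity pairs. First I would expand
$$:\!a_1(z)b_1(z)\!:\,:\!a_2(w)b_2(w)\!:$$
as a sum over admissible Wick patterns: the fully normal-ordered quartic $:\!a_1(z)b_1(z)a_2(w)b_2(w)\!:$, plus the four single contractions pairing one field of the left quadratic with one of the right, plus the two admissible double contractions that fully pair the four fields. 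Contractions inside a single normal-ordered group vanish by convention, so no other patterns occur.

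Next I would expand the reversed product $:\!a_2(w)b_2(w)\!:\,:\!a_1(z)b_1(z)\!:$ by the same procedure; the resulting contractions carry $\iota_{w,z}(w-z)^{-1}$ in place of $\iota_{z,w}(z-w)^{-1}$, and analogously for the squares in the double-contraction terms. Subtracting the two expansions to form the commutator, the fully normal-ordered quartic cancels because permuting four fermionic fields produces an even signature. The four single contractions combine via the identity
$$\iota_{z,w}(z-w)^{-1} + \iota_{w,z}(w-z)^{-1} = \delta(z-w)$$
recalled in the excerpt, while the two double contractions combine via
$$\iota_{z,w}(z-w)^{-2} - \iota_{w,z}(w-z)^{-2} = \partial_w \delta(z-w),$$
so all surviving terms are supported on the diagonal $z=w$.

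Finally I would read off the coefficients. Each single contraction $\underbrace{x(z)\,y(w)}$, with $x\in\{a_1,b_1\}$ and $y\in\{a_2,b_2\}$, produces $\pm\langle x,y\rangle$ times the normal-ordered product of the two uncontracted fields evaluated at $w$, yielding the four $\delta(z-w)$ terms with the alternating signs stated in the proposition; the two double contractions yield $\langle a_1,b_2\rangle\langle b_1,a_2\rangle$ and $-\langle a_1,a_2\rangle\langle b_1,b_2\rangle$ respectively, combining into the coefficient of $\partial_w\delta(z-w)$. The main technical obstacle is fermionic sign bookkeeping: each Wick contraction requires transposing a field past its neighbours to render the contracted pair adjacent, and these sign flips are precisely what generates the alternating $+,-,+,-$ pattern in the single-contraction terms and the relative minus sign between the two double contractions. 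Once the signs are accounted for by pulling each contracted pair leftward through the quartic in its original order, the identity follows as a direct instance of Wick's theorem (\cite{K1}, Theorem~3.3).
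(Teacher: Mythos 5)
Your proposal is correct and follows exactly the route the paper indicates: the paper gives no proof of this proposition, merely quoting it from \cite{JM} and noting that it ``can be obtained by Wick's theorem (see \cite{K1}, Theorem 3.3),'' and your sketch is a faithful and accurate expansion of that derivation, including the correct sign bookkeeping (nested versus crossing double contractions, and the reordering signs that turn the difference of the two expansions into the sum $\iota_{z,w}(z-w)^{-1}+\iota_{w,z}(w-z)^{-1}=\delta(z-w)$). No gaps.
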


For $\g = D_{n+1}$ (resp. $\g = A_{2n}$) we define two ghost fields
$$
\displaystyle e(z) = \sum_{k \in \Za} e(k) z^{-k-\frac{1}{2}} \ \ \mbox{and} \ \ \displaystyle \overline{e}(z) = \sum_{k \in \Za} \overline{e}(k) z^{-k-\frac{1}{2}}
$$
which have the only nonzero symmetric bilinear products
$\langle e , e \rangle = 1 = \langle \overline{e} , \overline{e} \rangle$ (resp. $\langle e , \overline{e} \rangle = 1 = \langle \overline{e}, e \rangle$).

In the following theorem, using the MRY presentation in Theorem \ref{MRY} we give a level $(1,0)$ fermionic representation of the twisted toroidal algebra $t(\g)$, $(\g = A_{2n-1}, D_{n+1}, A_{2n}, D_4)$ on $V$.

\begin{theorem}
Under the following map we have a level $(1,0)$ representation of the twisted toroidal Lie algebra $t(\g)$ on $V$:
 \begin{align*}
 X(\al_0,z) &= \begin{cases}:\vep_{\ov{2}}(z)\be^*(z): + :\vep_2^*(z)\vep_{\ov{1}}(z):& (A_{2n-1}),\\
 \sqrt{2}:\ov{e}(z)\be^*(z):& (D_{n+1}),\\
 :\vep_{\ov{1}}(z)\be^*(z):& (A_{2n}),\\
 :\vep_{\ov{1}}(z)\be^*(z): + \omega :\vep_{\ov{1}}^*(z)\vep_1^*(z): +\omega^2 :\vep_{\ov{2}}^*(z)\vep_2^*(z):& (D_4),
\end{cases} \end{align*}
\begin{align*}
 X(-\al_0,z) &= \begin{cases}:\be(z)\vep_{\ov{2}}^*(z): + :\vep_{\ov{1}}^*(z)\vep_2(z):& (A_{2n-1}),\\
 \sqrt{2}:\be(z)\ov{e}(z):& (D_{n+1}),\\
 :\be(z)\vep_{\ov{1}}^*(z):& (A_{2n}),\\
 :\be(z)\vep_{\ov{1}}^*(z): + \omega^2 :\vep_1(z)\vep_{\ov{1}}(z): +\omega :\vep_2(z)\vep_{\ov{2}}(z):& (D_4),
\end{cases}
 \end{align*}
 \begin{align*}
 X(\al_i,z)&= \begin{cases}:\vep_i(z) \vep^*_{i+1}(z): + :\vep_{\ov{i}}^*(z) \vep_{\ov{i+1}}(z):& (A_{2n-1}),\\
 :\vep_i(z) \vep_{i+1}^*(z):& (D_{n+1}),\\
 :\vep_i(z) \vep_{i+1}^*(z): + :\vep_{\ov{i}}^*(z) \vep_{\ov{i+1}}(z):& (A_{2n}),\\
 :\vep_1(z) \vep_2^*(z): + :\vep_{\ov{2}}(z) \vep_{\ov{1}}^*(z): + :\vep_{\ov{2}}(z) \vep_{\ov{1}}(z):& (D_4, i=1),
 \end{cases}
 \end{align*}
  \begin{align*}
 X(-\al_i,z)&= \begin{cases}:\vep_{i+1}(z)\vep_i^*(z): + :\vep_{\ov{i+1}}^*(z) \vep_{\ov{i}}(z):& (A_{2n-1}),\\
 :\vep_{i+1}(z) \vep_{i}^*(z):& (D_{n+1}),\\
 :\vep_{i+1}(z) \vep_{i}^*(z): + :\vep_{\ov{i+1}}^*(z) \vep_{\ov{i}}(z):& (A_{2n}),\\
 :\vep_2(z) \vep_1^*(z): + :\vep_{\ov{1}}(z) \vep_{\ov{2}}^*(z): + :\vep_{\ov{1}}^*(z) \vep_{\ov{2}}^*(z):& (D_4, i=1),
 \end{cases}
 \end{align*}

 for $1 \leq i \leq n-1$.
 \begin{align*}
 X(\al_n,z)&= \begin{cases} :\vep_n(z) \vep^*_{\overline{n}}(z) \! & (A_{2n-1}),\\
 \sqrt{2} :\vep_n(z) e(z): & (D_{n+1}),\\
\sqrt{2}(:\vep_n(z)\ov{e}(z): +:\vep_{\ov{n}}^*(z)e(z):)& (A_{2n}),\\
:\vep_2(z) \vep^*_{\ov{2}}(z) \! :  & (D_4, n=2),
 \end{cases}
 \end{align*}
\begin{align*}
 X(-\al_n,z)&= \begin{cases} :\vep_{\ov{n}}(z) \vep^*_n(z): & (A_{2n-1}),\\
 \sqrt{2} :e(z)\vep_n^*(z): & (D_{n+1}),\\
 \sqrt{2} (:e(z)\vep^*_{n}(z): + :\ov{e}(z)\vep_{\ov{n}}(z):) & (A_{2n}),\\
:\vep_{\ov{2}}(z) \vep_2^*(z):  & (D_4, n=2).
 \end{cases}
 \end{align*}
 The fields for the simple roots are represented by:
 \begin{align*}
 \al_0(z) &= \begin{cases}:\be^*(z) \be(z): + :\vep^*_2(z)\vep_2(z): + :\vep_{\ov{1}}(z) \vep^*_{\ov{1}}(z): + :\vep_{\ov{2}}(z) \vep^*_{\ov{2}}(z):& (A_{2n-1}),\\
  2 :\be^*(z) \be(z):& (D_{n+1}),\\
 :\vep_{\ov{1}}(z) \vep^*_{\ov{1}}(z): + : \be^*(z) \be(z):& (A_{2n}),\\
 :\vep^*_1(z) \vep_1(z): + :\be^*(z) \be(z):+ :\vep^*_2(z) \vep_2(z): + :\vep^*_{\ov{2}}(z)\vep_{\ov{2}}(z):& (D_4),
 \end{cases}
 \end{align*}
 \begin{align*}
 \al_i(z) &= \begin{cases} :\vep_i(z) \vep^*_i(z): + :\vep_{i+1}^*(z) \vep_{i+1}(z): + : \vep_{\ov{i}}^*(z) \vep_{\ov{i}}(z): + :\vep_{\ov{i+1}}(z) \vep^*_{\ov{i+1}}(z):& (A_{2n-1}),\\
 :\vep_i(z) \vep^*_i(z): + :\vep_{i+1}^*(z) \vep_{i+1}(z):& (D_{n+1}),\\
 :\vep_i(z)\vep_i^*(z): + : \! \vep_{i+1}^*(z) \vep_{i+1}(z): + :\vep_{\ov{i+1}}(z)\vep_{\ov{i+1}}^*(z): + :\vep_{\ov{i}}^*(z) \vep_{\ov{i}}(z): & (A_{2n}),\\
 :\vep_1(z)\vep_1^*(z): + :\vep_2^*(z)\vep_2(z): + 2:\vep_{\ov{2}}(z)\vep_{\ov{2}}^*(z): & (D_4, i=1),
 \end{cases}
 \end{align*}
 for $1\leq i \leq n-1$ and
 \begin{align*}
 \al_n(z) &= \begin{cases}:\vep_n(z)\vep_n^*(z): + :\vep_{\ov{n}}^*(z)\vep_{\ov{n}}(z):& (A_{2n-1}),\\
 2 :\vep_n(z)\vep_n^*(z): & (D_{n+1}),\\
 2 (:\vep_n(z)\vep_n^*(z): + :\vep_{\ov{n}}^*(z) \vep_{\ov{n}}(z):) & (A_{2n}),\\
 :\vep_2(z)\vep^*_2(z): + :\vep_{\ov{2}}^*(z)\vep_{\ov{2}}(z): & (D_4, n=2).
 \end{cases}
 \end{align*}
\end{theorem}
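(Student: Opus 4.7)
The plan is to verify directly that each defining relation of $t(\g)$ in the MRY presentation is preserved by the proposed assignment, with $\not{c}$ acting as the identity on $V$ (this corresponds to level $(1,0)$ under the identification $\psi$ of Theorem \ref{MRY}, which sends $\not{c}$ to $c_0 = \overline{s^{-1}ds}$). Every right-hand side is a finite linear combination of normal-ordered bilinears ${:}a(z)b(z){:}$ in the Clifford generators over the polarized space $\mathcal C$, so the main computational tool is Proposition \ref{fermbracket}: each commutator produces single-contraction terms proportional to $\delta(z-w)$ together with a double-contraction term proportional to $\partial_w\delta(z-w)\not{c}$.

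I would organize the verification into four groups. The mutually commuting relations $[X(\pm\alpha_i, z), X(\pm\alpha_i, w)] = 0$ are immediate, since Proposition \ref{fermbracket} pairs only creation with creation and starred with starred, and all such pairings vanish by $\{a(k), b(l)\} = 0 = \{a^*(k), b^*(l)\}$. The Heisenberg relations $[\alpha_i(z), \alpha_j(w)]$ come from brackets of ${:}a^*a{:}$ with ${:}b^*b{:}$; only the double contraction survives, and the prescribed scalar is obtained from a sum of $(a|b)(b|a)$ products using the orthonormality of $\{\varepsilon_i, \bar\varepsilon_i, c\}$ together with the ghost pairings for $D_{n+1}$ and $A_{2n}$. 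The mixed relations $[\alpha_i(z), X(\pm\alpha_j, w)]$ reduce to root-system bookkeeping: expanding $\alpha_i$ in the basis $\{\varepsilon_i, \bar\varepsilon_i, c\}$, the single contractions collapse into $\pm a_{ij} X(\pm\alpha_j, w)\delta(z-w)$.

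The most delicate step is $[X(\alpha_i, z), X(-\alpha_i, w)]$, which must yield both the current $\alpha_i(w)\delta(z-w)$ and the type- and index-dependent central coefficient in front of $\partial_w\delta(z-w)\not{c}$; the quantities $r - \delta_{i,n}(r-1)$, $r(1+\delta_{i,n}(r-1)) - \delta_{i,0}(r-1)$, and $1+(\delta_{i,0}+\delta_{i,n})(r-1)$ must all emerge naturally from the double contractions. This is where I expect the main obstacle to lie: it forces simultaneous consistency between the length normalizations of each $\alpha_i$, the $\sqrt 2$ factors entering the ghost fields for $D_{n+1}$ and $A_{2n}$, and the cube-root-of-unity couplings appearing in $D_4$. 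Finally, the Serre relations $(\mathrm{ad}\,X(\pm\alpha_i, \cdot))^{1-a_{ij}} X(\pm\alpha_j, \cdot) = 0$ are handled structurally: since each $X(\pm\alpha_i, z)$ is a fermion bilinear, iterated adjoint action eventually produces a bilinear in which the same creation (or starred) operator appears twice and hence vanishes by anticommutativity; the count $1 - a_{ij}$ of required brackets matches the presentation in every type, with $D_4$ (four nested brackets for $a_{ij} = -3$) requiring the most bookkeeping.
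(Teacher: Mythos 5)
Your proposal follows essentially the same route as the paper: both verify the MRY defining relations directly on the fermionic fields using Proposition \ref{fermbracket} (Wick's theorem), with $\not{c}$ acting as the identity to give level $(1,0)$, and both identify the central coefficients in $[X(\al_i,z),X(-\al_i,w)]$ as the delicate bookkeeping step. The paper simply carries out a representative sample of these computations (relations (6) and (8) for the indices $0$, $n-1$, $n$) in each of the four types and declares the remaining relations, including the Serre relations, to be checked similarly.
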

\begin{proof}
It is sufficient to show that the relations $(1) - (12)$ of the MRY presentation of $t(\g)$ in Section 3 hold. In order to calculate the corresponding brackets we use Proposition \ref{fermbracket} repeatedly. Since most of these calculations are similar we only verify relations: $(6), i=0=j, \ \ i=n-1, j=n$, and $(8), i=0=j, \ \ i=n=j$.  The remaining relations can be checked similarly.

First we consider the case when $\g = A_{2n-1}$. In this case using Proposition \ref{fermbracket} we get:
\begin{align*}
[\al_0(z), X(\al_0,w)] =& [:\be^*(z)\be(z): + :\vep_2^*(z)\vep_2(z): + :\vep_{\ov{1}}(z)\vep_{\ov{1}}^*(z):\\
&+ :\vep_{\ov{2}}(z)\vep_{\ov{2}}^*(z):, :\vep_{\ov{2}}(w)\be^*(w): + :\vep_2^*(w)\vep_{\ov{1}}(w):]\\
& = -:\be^*(w)\vep_{\ov{2}}(w): \d (z-w) + :\vep_2^*(w) \vep_{\ov{1}}(w): \d (z-w) \\
& - :\vep_{\ov{1}}(w)\vep_2^*(w): \d (z-w) + :\vep_{\ov{2}}(w)\be^*(w): \d (z-w)\\
&= 2(:\vep_{\ov{2}}(w)\be^*(w): + :\vep_2^*(w) \vep_{\ov{1}}(w):) \d (z-w) \\
&= 2X(\al_0,w) \d (z-w).
\end{align*}
\begin{align*}
[\al_0(z), X(-\al_0,w)] =&[:\be^*(z)\be(z): + :\vep_2^*(z)\vep_2(z): + :\vep_{\ov{1}}(z)\vep_{\ov{1}}^*(z):\\
&+ :\vep_{\ov{2}}(z)\vep_{\ov{2}}^*(z):, :\be(w)\vep_{\ov{2}}^*(w): + :\vep_{\ov{1}}^*(w)\vep_2(w):]\\
&= - :\be(w)\vep_{\ov{2}}^*(w): \d (z-w) + :\vep_2(w) \vep_{\ov{1}}^*(w): \d (z-w) \\
& - :\vep_{\ov{1}}^*(w)\vep_2(w): \d (z-w) + :\vep_{\ov{2}}^*(w) \be(w) \! : \d (z-w) \\
&= - 2(:\be(w)\vep_{\ov{2}}^*(w): + :\vep_{\ov{1}}^*(w)\vep_2(w):) \d (z-w) \\
&= - 2X(-\al_0,w) \d (z-w).
\end{align*}
\begin{align*}
[\al_{n-1}(z), X(\al_n,w)] =&[:\vep_{n-1}(z)\vep_{n-1}^*(z): + :\vep_n^*(z)\vep_n(z): \\
&+ :\vep_{\ov{n-1}}^*(z)\vep_{\ov{n-1}}(z): + :\vep_{\ov{n}}(z)\vep_{\ov{n}}^*(z):, :\vep_n(w)\vep_{\ov{n}}^*(w):]\\
&= - :\vep_n(w)\vep_{\ov{n}}^*(w): \d (z-w) + :\vep_{\ov{n}}^*(w)\vep_n(w): \d (z-w)\\
&= -2X(\al_n,w) \d (z-w) = a_{n-1,n} X(\al_n,w) \d (z-w).
\end{align*}
\begin{align*}
[\al_{n-1}(z), X(-\al_n,w)] =& [:\vep_{n-1}(z)\vep_{n-1}^*(z): + :\vep_n^*(z)\vep_n(z): \\
&+ :\vep_{\ov{n-1}}^*(z)\vep_{\ov{n-1}}(z): + :\vep_{\ov{n}}(z)\vep_{\ov{n}}^*(z):, :\vep_{\ov{n}}(w)\vep_n^*(w):]\\
&= - :\vep_n^*(w)\vep_{\ov{n}}(w): \d (z-w) + :\vep_{\ov{n}}(w)\vep_n^*(w): \d (z-w)\\
&= 2X(-\al_n,w) \d (z-w) = - a_{n-1,n} X(\al_n,w) \d (z-w).
\end{align*}
\begin{align*}
[X(\al_0, z), X(-\al_0, w)] =&[:\vep_{\ov{2}}(z)\be^*(z): + :\vep_2^*(z)\vep_{\ov{1}}(z):,\\
& :\be(w)\vep_{\ov{2}}^*(w): + :\vep_{\ov{1}}^*(w)\vep_2(w):]\\
& :\vep_{\ov{2}}(w)\vep_{\ov{2}}^*(w): \d (z-w) + :\be^*(w) \be(w): \d (z-w) \\
&+ \partial_w \d (z-w) + :\vep_{\ov{1}}(w)\vep_{\ov{1}}^*(w): \d (z-w) \\
&+ :\vep_2^*(w)\vep_2(w): \d (z-w) + \partial_w \d (z-w)\\
&=\al_0(w)\d (z-w) + 2\partial_w\d (z-w) \\
&= \al_0(w)\d (z-w) + r\partial_w\d (z-w)\not{c},
\end{align*}
since $r = 2$ here and $\not{c}$ acts as $1$.
\begin{align*}
[X(\al_n, z), X(-\al_n, w)] =& [:\vep_n(z)\vep_{\ov{n}}^*(z):, :\vep_{\ov{n}}(w)\vep_n^*(w):]\\
&=:\vep_n(w)\vep_n^*(w): \d (z-w)+ :\vep_{\ov{n}}^*(w)\vep_{\ov{n}}(w): \d (z-w)\\
& + \partial_w \d (z-w)\\
&=\al_n(w)\d (z-w) + \partial_w\d (z-w)\not{c}.
\end{align*}
Next we consider the case when $\g = D_{n+1}$. In this case using Proposition \ref{fermbracket} we get:
\begin{align*}
[\al_0(z), X(\al_0,w)] =& [ 2 :\be^*(z)\be(z):, \sqrt{2} :\ov{e}(w)\be^*(w): ]\\
&= 2 \sqrt{2} :\ov{e}(w)\be^*(w): \d (z-w) \\
&= 2X(\al_0,w)\d (z-w).
\end{align*}
\begin{align*}
[\al_0(z), X(-\al_0,w)] =& [ 2 :\be^*(z)\be(z):, \sqrt{2} :\be(w)\ov{e}(w): ]\\
&= -2 \sqrt{2} :\be(w)\ov{e}(w): \d (z-w)\\
&= -2X(-\al_0,w)\d (z-w).
\end{align*}
\begin{align*}
[\al_{n-1}(z), X(\al_n,w)] =& [:\vep_{n-1}(z)\vep_{n-1}^*(z): + :\vep_n^*(z)\vep_n(z):, \sqrt{2}:\vep_n(w)e(w):]\\
&= -\sqrt{2}:\vep_n(w)e(w):\d (z-w) \\
&=a_{n-1,n}X(\al_n,w)\d (z-w).
\end{align*}
\begin{align*}
[\al_{n-1}(z), X(-\al_n,w)] =& [:\vep_{n-1}(z)\vep_{n-1}^*(z): + :\vep_n^*(z)\vep_n(z):, \sqrt{2}
:e(w)\vep_n^*(w):]\\
&= \sqrt{2}:e(w)\vep_n^*(w):\d (z-w) \\
&=- a_{n-1,n}X(-\al_n,w)\d (z-w).
\end{align*}
\begin{align*}
[X(\al_0, z), X(-\al_0, w)] =&[\sqrt{2}:\ov{e}(z)\be^*(z):, \sqrt{2}:\be(w)\ov{e}(w):]\\
&= 2:\be^*(w)\be(w):\d (z-w) + 2 \partial_w\d (z-w)\\
&=\al_0(w)\d (z-w) + 2\partial_w\d (z-w) \\
&= \al_0(w)\d (z-w) + r\partial_w\d (z-w)\not{c},
\end{align*}
since $r = 2$ here and $\not{c}$ acts as $1$.
\begin{align*}
[X(\al_n, z), X(-\al_n, w)] =& [\sqrt{2}:\vep_n(z)e(z):, \sqrt{2}:e(w)\vep_n^*(w):]\\
&=2:\vep_n(w)\vep_n^*(w): \d (z-w)+ 2 \partial_w \d (z-w)\\
&=\al_n(w)\d (z-w) + 2\partial_w\d (z-w)\not{c}.
\end{align*}
Next we consider the case when $\g = A_{2n}$. As before using Proposition \ref{fermbracket} we have:
\begin{align*}
[\al_0(z), X(\al_0,w)] =&[:\vep_{\ov{1}}(z)\vep_{\ov{1}}^*(z):+:\be^*(z)\be(z):, :\vep_{\ov{1}}(w)\be^*(w):]\\
&= :\vep_{\ov{1}}(w)\be^*(w): \d (z-w) - :\be^*(w)\vep_{\ov{1}}(w): \d (z-w)\\
&= 2X(\al_0,w)\d (z-w).
\end{align*}
\begin{align*}
[\al_0(z), X(-\al_0,w)] =&[:\vep_{\ov{1}}(z)\vep_{\ov{1}}^*(z):+:\be^*(z)\be(z):,
:\be(w)\vep_{\ov{1}}^*(w):]\\
&= :\vep_{\ov{1}}^*(w)\be(w): \d (z-w) - :\be(w)\vep_{\ov{1}}^*(w): \d (z-w)\\
&= -2X(-\al_0,w)\d (z-w).
\end{align*}
\begin{align*}
[\al_{n-1}(z), X(\al_n,w)] =& [:\vep_{n-1}(z)\vep_{n-1}^*(z): + :\vep_n^*(z)\vep_n(z):+:\vep_{\ov{n}}(z)\vep_{\ov{n}}^*(z):\\
&+:\vep_{\ov{n-1}}^*(z)\vep_{\ov{n-1}}(z):, \sqrt{2}(:\vep_n(w)\ov{e}(w):+:\vep_{\ov{n}}^*(w)e(w):]\\
&= \sqrt{2} (-:\vep_n(w)\ov{e}(w): - :\vep^*_{\ov{n}}(w)e(w): )\d (z-w) \\
&=a_{n-1,n}X(\al_n,w)\d (z-w).
\end{align*}
\begin{align*}
[\al_{n-1}(z), X(-\al_n,w)] =& [:\vep_{n-1}(z)\vep_{n-1}^*(z): + :\vep_n^*(z)\vep_n(z):+:\vep_{\ov{n}}(z)\vep_{\ov{n}}^*(z):\\
&+:\vep_{\ov{n-1}}^*(z)\vep_{\ov{n-1}}(z):, \sqrt{2}(:e(w)\vep_n^*(w):+:\ov{e}(w)\vep_{\ov{n}}(w):]\\
&= \sqrt{2} (-:\vep_n^*(w)e(w): - :\vep_{\ov{n}}(w)\ov{e}(w): )\d (z-w) \\
&=- a_{n-1,n}X(-\al_n,w)\d (z-w).
\end{align*}
\begin{align*}
[X(\al_0, z), X(-\al_0, w)] =&[:\vep_{\ov{1}}(z)\be^*(z):,:\be(w)\vep_{\ov{1}}^*(w):]\\
&= :\be^*(w)\be(w):\d (z-w)+:\vep_{\ov{1}}(w)\vep_{\ov{1}}^*(w): \d (z-w)\\
&+ \partial_w\d (z-w)\\
&= \al_0(w)\d (z-w) + \partial_w\d (z-w)\not{c}.
\end{align*}
\begin{align*}
[X(\al_n, z), X(-\al_n, w)] =& [\sqrt{2}(:\vep_n(z)\ov{e}(z):+:\vep_{\ov{n}}^*(z)e(z):,\\
&\sqrt{2}(:e(w)\vep_n^*(w):+:\ov{e}(w)\vep_{\ov{n}}(w):]\\
&=2(:\vep_n(w)\vep_n^*(w):+:\ov{e}(w)e(w):+:e(w)\ov{e}(w):\\
&+:\vep_{\ov{n}}^*(w)\vep_{\ov{n}}(w):)\d (z-w)+ 4 \partial_w \d (z-w)\\
&=\al_n(w)\d (z-w) + r^2 \partial_w\d (z-w)\not{c},
\end{align*}
since $:\ov{e}(w)e(w):+:e(w)\ov{e}(w): = 0$ and $r = 2$.

Finally we consider the case when $\g = D_4$. Note that in this case $r = 3$ and $n = 2$. As before using Proposition \ref{fermbracket} we have:
\begin{align*}
[\al_0(z), X(\al_0,w)] =&[:\vep^*_1(z) \vep_1(z): + :\be^*(z) \be(z):+ :\vep^*_2(z) \vep_2(z): \\
&+ :\vep^*_{\ov{2}}(z)\vep_{\ov{2}}(z):, :\vep_{\ov{1}}(w)\be^*(w): \\
&+ \om :\vep_{\ov{1}}^*(w)\vep_1^*(w): +\om^2 :\vep_{\ov{2}}^*(w)\vep_2^*(w):]\\
&= (:\vep_{\ov{1}}(w)\vep_1^*(w): + \om :\vep_{\ov{1}}^*(w)\vep_1^*(w): + :\vep_{\ov{1}}(w)\be^*(w):\\
&+ \om :\vep_{\ov{1}}^*(w)\be^*(w): + 2 \om^2:\vep_{\ov{2}}^*(w)\vep_2^*(w):) \d (z-w)\\
&= 2(:\vep_{\ov{1}}(w)\be^*(w): \om :\vep_{\ov{1}}^*(w)\vep_1^*(w): \\
&+\om^2 :\vep_{\ov{2}}^*(z)\vep_2^*(w):)\d (z-w) = 2X(\al_0,w)\d (z-w),
\end{align*}
since $:\vep_{\ov{1}}(w)\vep_1^*(w): = :\vep_{\ov{1}}(w)\be^*(w):$ and $:\vep_{\ov{1}}^*(w)\be^*(w): = :\vep_{\ov{1}}^*(w)\vep_1^*(w):$.
\begin{align*}
[\al_0(z), X(-\al_0,w)] =&[:\vep^*_1(z) \vep_1(z): + :\be^*(z) \be(z):+ :\vep^*_2(z) \vep_2(z): \\
&+ :\vep^*_{\ov{2}}(z)\vep_{\ov{2}}(z):, :\be(w)\vep_{\ov{1}}^*(w): \\
&+ \om^2 :\vep_1(w)\vep_{\ov{1}}(w): +\om :\vep_2(w)\vep_{\ov{2}}(w):]\\
&= - 2(:\be(w)\vep_{\ov{1}}^*(w): + \om^2 :\vep_1(w)\vep_{\ov{1}}(w): \\
&+\om :\vep_2(w)\vep_{\ov{2}}(z):)\d (z-w) = - 2X(-\al_0,w)\d (z-w).
\end{align*}
\begin{align*}
[\al_1(z), X(\al_2,w)] =& [:\vep_1(z)\vep_1^*(z): + :\vep_2^*(z)\vep_2(z) + 2:\vep_{\ov{2}}(z)\vep_{\ov{2}}^*(z):,\\
&:\vep_2(w)\vep_{\ov{2}}^*(w):]\\
&= (- :\vep_2(w)\vep_{\ov{2}}^*(w): + 2:\vep_{\ov{2}}^*(w)\vep_2(w):) \d (z-w)\\
&= -3 :\vep_2(w)\vep_{\ov{2}}^*(w):\d (z-w) = a_{1,2}X(\al_2,w)\d (z-w).
\end{align*}
\begin{align*}
[\al_1(z), X(-\al_2,w)] =& [:\vep_1(z)\vep_1^*(z): + :\vep_2^*(z)\vep_2(z) + 2:\vep_{\ov{2}}(z)\vep_{\ov{2}}^*(z):,\\
&:\vep_{\ov{2}}(w)\vep_2^*(w):]\\
&= (- :\vep_2^*(w)\vep_{\ov{2}}(w): + 2:\vep_{\ov{2}}(w)\vep_2^*(w):) \d (z-w)\\
&= 3 :\vep_{\ov{2}}(w)\vep_2^*(w):\d (z-w) = - a_{1,2}X(-\al_2,w)\d (z-w).
\end{align*}
\begin{align*}
[X(\al_0, z), X(-\al_0, w)] =&[:\vep_{\ov{1}}(z)\be^*(z):+ \om :\vep_{\ov{1}}^*(z)\vep_1^*(z): +\om^2 :\vep_{\ov{2}}^*(z)\vep_2^*(z):,\\
&:\be(w)\vep_{\ov{1}}^*(w):+ \om^2 :\vep_1(w)\vep_{\ov{1}}(w): +\om :\vep_2(w)\vep_{\ov{2}}(w):]\\
&= (:\be^*(w)\be(w):+:\vep_{\ov{1}}(w)\vep_{\ov{1}}^*(w): + :\vep_{\ov{1}}^*(w)\vep_{\ov{1}}(w)\\
&+:\vep_1^*(w)\vep_1(w): + :\vep_2^*(w)\vep_2(w): + \\
&:\vep_{\ov{2}}^*(w)\vep_{\ov{2}}(w):)\d (z-w)+ 3\partial_w\d (z-w)\\
&= (:\be^*(w)\be(w):+:\vep_1^*(w)\vep_1(w): + :\vep_2^*(w)\vep_2(w):\\
&+ :\vep_{\ov{2}}^*(w)\vep_{\ov{2}}(w):)\d (z-w)+ 3\partial_w\d (z-w)\\
&= \al_0(w)\d (z-w) +3\partial_w\d (z-w)\not{c}.
\end{align*}
\begin{align*}
[X(\al_2, z), X(-\al_2, w)] =& [:\vep_2(z)\vep_{\ov{2}}^*(z):, :\vep_{\ov{2}}(w)\vep_2^*(w):]\\
&= (:\vep_2(w)\vep_2^*(w): + :\vep_{\ov{2}}^*(w)\vep_{\ov{2}}^*(w):)\d (z-w)\\
&+ \partial_w \d (z-w)\\
&=\al_2(w)\d (z-w) + \partial_w\d (z-w)\not{c}.
\end{align*}
\end{proof}


\bibliographystyle{amsalpha}

\end{document}